\newtheorem{theorem}{Theorem}
\numberwithin{theorem}{section}
\newtheorem{corollary}[theorem]{Corollary}
\newtheorem{remark}[theorem]{Remark}
\newtheorem{example}[theorem]{Example}
\newtheorem{assumption}{Assumption} 
\newtheorem{assumptioni}{General assumptions}
\def\rmd{\mathrm{d}}
\def\Law{\mathrm{Law}}
\def\1{\mathds{1}}
\def\d{\mathbf{dist}}
\def\R{\mathbb{R}}
\def\X{\mathbb{X}}
\def \cP{\mathcal{P}}
\title{Conditions for uniform in time convergence: applications to averaging, numerical discretisations and mean-field systems}
\author[1]{Katharina Schuh}
\author[2]{Iain Souttar}
\affil[1]{Institute of Analysis and Scientific Computing, TU Wien, Wiedner Hauptstraße 8–10, 1040 Wien, \textit{katharina.schuh@tuwien.ac.at}}
\affil[2]{Centre for Applications of Mathematical \& Computing Sciences, University of Warwick, Coventry, UK, \textit{iain.souttar@warwick.ac.uk}}
\date{}
\begin{document}
\maketitle

\begin{abstract}
    We establish general conditions under which there exists uniform in time convergence between a stochastic process and its approximated system. 
    These standardised conditions consist of a local in time estimate between the original and the approximated process as well as of a contraction property for one of the processes and a uniform control for the other one.
    Specifically, the results we present provide global in time error bounds for multiscale methods and numerical discretisations as well as uniform in time propagation of chaos bounds for mean-field particle systems. We provide a general method of proof which can be applied to many types of approximation.
    
    In all three scenarios, examples where the joint conditions are verified and uniform in time convergence is achieved are given. \\
    \textbf{MSC2020 subject classifications:} Primary 60J60; secondary 60H35, 65C30, 82C31 \\
    \textbf{Keywords and phrases:} Uniform in time convergence, multiscale methods, averaging for SDEs, numerical discretisation, mean-field particle systems, strong error
\end{abstract}

\section{Introduction}

There are a great variety of situations in the sciences in which two systems must be compared. In particular, one may be interested in a system which is difficult to analyse. Alongside this, there may be a system which is more tractable but does not produce precisely the same behaviour. In this case we may consider the latter as a type of \emph{approximation} to the former. If one wishes to make quantitative-- or even qualitative-- statements about the original system, then one must first understand the distance (in some metric) between the two. In this paper we study the distance between a variety of systems and some corresponding approximations, in particular providing verifiable conditions sufficient to obtain a uniform in time bound on the distance between the two. We start with two processes $(X_t)_{t \ge 0}$ and $(X_t^{\delta})_{t\ge 0}$ on some state space $\X$. %with corresponding transition function $p_t$ and $p_t^{\delta}$. Then, $\nu p_t^{(\delta)} (\rmd y) = \int_{\X} \nu (\rmd x) p_t^{(\delta)} (x, \rmd y)$ denotes the law at time $t>0$ of the process $(X_s^{(\delta)})_{s\ge 0}$ with initial condition $X_0^{(\delta)}\sim \nu$.
The law of the process $(X_s^{(\delta)})_{s\ge 0}$ at time $t>0$ with initial condition $X_0^{(\delta)}\sim \nu$ is denoted by $$\nu p_t^{(\delta)} (\rmd z) = \int_{\X} \nu (\rmd x) p_t^{(\delta)} (x, \rmd z),$$ where $p_t^{(\delta)}$ is the corresponding transition function. 
Consider a distance $\d(\cdot , \cdot)$ between probability measures $\nu, \eta$ on $\X$.

In this case, one often arrives at a bound such as the following
\begin{equation}\label{eqn:introFTH}
     \sup_{t\le T}\{\d (\nu p_t, \nu p_t^{\delta})\} \le C(T) \delta^{\alpha}\, ,
\end{equation}
where, in particular, $C(T)$ grows with $T$ and $\delta$ is some small parameter, for example a discretisation parameter. This is fine for many applications, but can be troublesome when, for example, further asymptotic analysis or long-time predictions are required.

We say that there is uniform in time convergence (in $\delta$), with rate $\alpha>0$, if the following is true for all $t \geq 0$, $\delta>0$ and all probability measures $\nu$ on $\X$
\begin{equation}\label{eqn:introuit}
     \sup_{t\ge 0}\d (\nu p_t, \nu p_t^{\delta}) \le C \delta^{\alpha} \, . 
\end{equation}

 We consider uniform in time convergence in three settings, and this paper is set out as follows. In Section \ref{sec:multiscalemethods}, we prove a bound for the method of averaging for Stochastic Differential Equations (SDEs), seen in Theorem~\ref{thm:avg_meas}. In Section~\ref{sec:numdiscr} we consider the numerical discretisation of SDEs and prove a uniform in time result in Theorem \ref{thm:uit_num}. Finally, in Section~\ref{sec:meanfield} we consider mean-field particle systems described by a system of SDEs where the uniform in time result is given in Theorem~\ref{thm:mf_meas}. Each section also contains discussion on applications, with examples given of systems for which our assumptions can be verified. We note that in these three settings the vernacular is different in the corresponding literature. Results such as \eqref{eqn:introuit} are referred to as uniform in time bounds, uniform in time convergence, uniform in time propagation of chaos, or even global-in-time stability.

Proving uniform in time convergence of approximations is often a delicate task. In particular, techniques using Gr\"onwall's inequality in the standard way lead to $C(T)$ in \eqref{eqn:introFTH} growing exponentially in $T$. For many processes and corresponding approximations, a bound such as \eqref{eqn:introuit} is simply not true. There are approximations, though, for which there is substantial evidence pointing towards uniform in time convergence holding. Attention has therefore been focused on closing this theoretical gap; there have been numerous results obtaining uniform in time convergence in particular cases, and we collect them in Section \ref{sec:litrev}. However, these are isolated examples, and we are not aware of any common rubric or systematic method of proof for uniform in time convergence of approximations in the existing literature. In this paper we provide a set of conditions (see \textbf{General assumptions 1}) and a general framework (see the proof of Theorem \ref{thm:intro}), which together allow one to obtain uniform in time convergence results across a broad range of approximations.
In particular, our approach is not restricted to a specific metric but is rather formulated for a general one. To the best of our knowledge, this universal presentation is new. 
%To the best of our knowledge, the uniform in time results under a general metric are also new in the cases presented here.
%In particular, when the metric is chosen to be $L^2$-Wasserstein distance then our results imply strong error convergence, which is new to our knowledge in the case of averaging. 
Moreover, when the uniform in time result can be shown in $L^2$ Wasserstein distance, strong error convergence can often be deduced for the same framework (see \Cref{cor:avg_strongerror} and \Cref{cor:disc_strongerror}). This being said, there are certainly uniform in time convergence results for each of our settings with specific metrics and, again, we refer to these in Section \ref{sec:litrev}.
The contribution of this paper, in our understanding, is covering all three settings under one method of proof, offering a common procedure allowing one to prove uniform in time results for more general approximations. In particular, for many approximations finite time convergence (\textbf{General assumptions 1} (2)) is well known, and the method of proof presented in Theorem \ref{thm:intro} allows one to leverage existing results to obtain uniform in time convergence. We now set out the method of proof we use throughout this paper in a general form.

Let there be two processes $(X_t)_{t \ge 0}$ and $(X_t^{\delta})_{t\ge 0}$ as above. We assume the following.
\begin{assumptioni} \label{ass:intro} We impose
\begin{enumerate}
    \item \label{ass:intro1} (Contractivity of process $(X_t)_{t\ge 0}$): There exists $\lambda>0$ such that for all probability measures $\nu$ and $\eta$ and $t>0$ it holds
    \begin{align*}
        \d (\nu p_t, \mu p_t) \le e^{-\lambda t} \d (\nu, \eta)\,.
    \end{align*}
    \item \label{ass:intro2}  (Finite time convergence) There exists a time $\tau >0$, a constant $\alpha>0$ and a function $\mathbf{M}:\mathcal{P}(\X)\to[0,\infty)$ 
    such that for all probability measures $\nu\in \mathcal{P}(\X)$ any $\delta>0$ it holds  
    \begin{align*}
        \sup_{t\le \tau} \d (\nu p_t, \nu p_t^{\delta})\le \delta^{\alpha} \mathbf{M}(\nu)\,.
    \end{align*}
    \item \label{ass:intro3} (Uniform control) For all probability measures $\nu$ on  $\X$ there exists a constant $\mathbf{C}(\nu)$, independent of $t$ and $\delta$, such that 
    \begin{align*}
        \sup_{t\ge 0} \mathbf{M}(\nu p_t^{\delta}) \le \mathbf{C}(\nu)\,.
    \end{align*}
\end{enumerate}
    
\end{assumptioni}
We note that the function $\mathbf{M}$ is independent of $\delta$. %, but may depend on $\tau$.
Further, if the $\d$ is the $L^p$ Wasserstein distance, i.e., $\d(\cdot, \cdot)=\mathcal{W}_p(\cdot, \cdot)$, then $\mathbf{M}(\cdot)$ is often connected to the $p$-th moment, e.g., $\mathbf{M}(\nu)=\Big(\int_{\X} |x|^p \nu(\rmd x)\Big)^{1/p}$.

Before presenting Theorem \ref{thm:intro}, which is representative of the types of results we obtain in this paper, we now very briefly describe the \textbf{General assumptions 1}, each in turn. The contractivity of one of the processes enforces that the initial data of the process become less important as time grows. Under certain choices of $\d$, for example the $L^p$ Wasserstein distance, this implies that there exists a unique invariant measure and the convergence to it is exponential. In particular this is the case for $\d$ being the $L^p$ Wasserstein distance on the space of probability measures with finite $p$-th moment $\mathcal{P}^p(\X)$ by Banach fixed point theorem provided $\X$ is complete and separable, cf. \cite[Corollary 33]{ollivier2010}, \cite{Villani2009}. 
The finite time convergence means that the processes are close, at least for some time, and for this amount of time one has convergence of the laws in $\delta$. We note that the time interval over which there is convergence must be independent of $\delta$. The uniform control ensures that at least one of the processes spends enough time in a central compact set and does not escape to infinity with time. Notice that both \textbf{General assumptions 1} (1) and \textbf{General assumptions 1} (3) refer to only one of the processes, and that they are different from each other, i.e. one cannot just have uniform control and contractivity of the same process.
We now present a theorem sketch of the general method of proof used throughout this paper.
\begin{theorem}[Informal theorem of uniform in time convergence]\label{thm:intro} Suppose the \textbf{General assumptions 1} hold. Let $\nu$ be a probability measure on $\X$. Then there exists a constant $\tilde{\mathbf{C}}>0$, independent of $\delta$ and $t$, such that for all $t\ge 0$,
\begin{align} \label{eq:intro_uit}
    \d (\nu p_t, \nu p_t^{\delta}) \le \tilde{\mathbf{C}}\delta^{\alpha}.
\end{align}
\end{theorem}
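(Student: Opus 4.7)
The plan is to iterate the finite-time convergence bound across consecutive intervals of length $\tau$, using contractivity of $p_t$ to damp the accumulated error at each step and the uniform control to cap the one-step error uniformly in time. Concretely, I fix $t\ge 0$, write $t=n\tau+s$ with $n\in\mathbb{N}_0$ and $s\in[0,\tau)$, and set $a_k:=\d(\nu p_{k\tau},\nu p_{k\tau}^{\delta})$, with $a_0=0$.

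The key step is a one-step recursion obtained from the following decomposition: inserting the intermediate measure $(\nu p_{k\tau}^{\delta})p_{\tau}$ (the discretised measure at time $k\tau$ evolved by the \emph{true} semigroup for time $\tau$) and using the triangle inequality,
\begin{align*}
a_{k+1} \le \d\bigl((\nu p_{k\tau})p_{\tau},\,(\nu p_{k\tau}^{\delta})p_{\tau}\bigr) + \d\bigl((\nu p_{k\tau}^{\delta})p_{\tau},\,(\nu p_{k\tau}^{\delta})p_{\tau}^{\delta}\bigr).
\end{align*}
For the first summand, \textbf{General assumptions 1} (\ref{ass:intro1}) gives a contraction factor $e^{-\lambda\tau}a_k$. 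For the second, \textbf{General assumptions 1} (\ref{ass:intro2}) applied to the initial law $\nu p_{k\tau}^{\delta}$ yields $\delta^{\alpha}\mathbf{M}(\nu p_{k\tau}^{\delta})$, which by \textbf{General assumptions 1} (\ref{ass:intro3}) is at most $\delta^{\alpha}\mathbf{C}(\nu)$. Hence $a_{k+1}\le e^{-\lambda\tau}a_k+\delta^{\alpha}\mathbf{C}(\nu)$, and unrolling from $a_0=0$ gives
\begin{align*}
a_n \;\le\; \delta^{\alpha}\mathbf{C}(\nu)\sum_{j=0}^{n-1}e^{-\lambda j\tau} \;\le\; \frac{\delta^{\alpha}\mathbf{C}(\nu)}{1-e^{-\lambda\tau}}.
\end{align*}

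To finish, I repeat the same decomposition on the remainder interval of length $s<\tau$:
\begin{align*}
\d(\nu p_t,\nu p_t^{\delta}) \;\le\; e^{-\lambda s}a_n + \delta^{\alpha}\mathbf{M}(\nu p_{n\tau}^{\delta}) \;\le\; \delta^{\alpha}\mathbf{C}(\nu)\left(1+\frac{1}{1-e^{-\lambda\tau}}\right),
\end{align*}
so the claim follows with $\tilde{\mathbf{C}}=\mathbf{C}(\nu)\bigl(1+(1-e^{-\lambda\tau})^{-1}\bigr)$, which is manifestly independent of $t$ and $\delta$.

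The only substantive design choice — and the reason the assumptions are asymmetric between the two processes — is the side on which the intermediate measure sits: one must propagate the \emph{discretised} marginal $\nu p_{k\tau}^{\delta}$ forward by the \emph{true} semigroup, because this is precisely the configuration in which contractivity (available only for $p_t$) and finite-time convergence controlled by $\mathbf{M}$ applied to a discretised marginal (bounded only via assumption (\ref{ass:intro3})) can be combined. Since this telescoping is otherwise purely mechanical, no genuine obstacle arises beyond choosing this decomposition consistently; the argument never invokes a Grönwall-type estimate and hence avoids the exponential-in-$T$ blow-up of \eqref{eqn:introFTH}.
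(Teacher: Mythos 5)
Your proof is correct and is essentially the paper's argument in a different notation. The paper uses a direct telescoping sum $\d(\nu p_t,\nu p_t^\delta)\le\sum_{i=0}^{k-1}\d(\nu p_{i\tau}^\delta p_{(k-i)\tau},\nu p_{(i+1)\tau}^\delta p_{(k-i-1)\tau})$ and bounds each summand by applying contractivity $(k-i-1)$ times, then the local estimate, then the uniform control; your one-step recursion $a_{k+1}\le e^{-\lambda\tau}a_k+\delta^\alpha\mathbf{C}(\nu)$ inserts exactly the same intermediate measure $(\nu p_{k\tau}^\delta)p_\tau$ and, once unrolled, yields the identical geometric sum. The only cosmetic difference is that the paper's sketch restricts to $t=k\tau$ and defers the remainder to the proof of Theorem~\ref{thm:avg_meas}, whereas you treat the leftover interval $s\in[0,\tau)$ explicitly; your handling of it is consistent with what the paper does there. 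Your closing remark on why the intermediate measure must sit on the discretised side matches the discussion around Remark~\ref{rem:relabelling}.
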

\begin{proof}[(Sketch of proof)]
    In the following we assume $t = k\tau$ for some $k \in \mathbb{N}$, though this is purely for ease of exposition as it can be relaxed, see the proof of Theorem \ref{thm:avg_meas}. We write
   \begin{equation*}
       \begin{split}
           \d (\nu p_t, \nu p_t^{\delta}) &\le \sum_{i=0}^{k-1}\d (\nu p_{i\tau }^{\delta} p_{(k-i)\tau}, \nu p_{(i+1)\tau}^{\delta} p_{(k-i-1)\tau}) \\
           &\le   \sum_{i=0}^{k-1}e^{-\lambda (k-i-1)\tau} \d (\nu p_{i\tau }^{\delta}p_{\tau}, \nu p_{(i+1)\tau}^{\delta} ) \\
           &\le \sum_{i=0}^{k-1}e^{-\lambda (k-i-1)\tau} \delta^\alpha \mathbf{M}(\nu p_{i\tau}^{\delta}) \\
           &\le \sum_{i=0}^{k-1}e^{-\lambda (k-i-1)\tau} \delta^\alpha \mathbf{C}(\nu) \\
           &\le   \frac{\delta^\alpha \mathbf{C}(\nu)}{1-e^{-\lambda \tau}}\,,
       \end{split}
   \end{equation*} 
   where we applied the triangle inequality and have used \textbf{General assumptions}~\ref{ass:intro} (\ref{ass:intro1}), \textbf{General assumptions}~\ref{ass:intro} (\ref{ass:intro2}) and \textbf{General assumptions}~\ref{ass:intro} (\ref{ass:intro3}) for the second, third and fourth inequalities respectively. This is independent of $k$ and so the proof is complete. See also Figure~\ref{figure1} for an illustration of the influence of the assumption to the distance of two neighbouring paths.
\end{proof}
The above method of proof is completely independent on the labelling of the processes. That is, the same proof can be repeated but with $X_t^\delta$ as $X_t$ and vice versa. In this case the assumptions are also switched. This, in theory, allows flexibility in the case that the assumptions are more easily verifiable one way around than the other. However, in some specific settings the order does matter, see Remark \ref{rem:relabelling}. Implicitly in the above we have used an assumption of some form of time homogeneity, purely by the existence of the transition functions (and by them being independent of the starting time). This is very important, and is made throughout this paper. Without this, even for very simple systems, uniform in time convergence is not guaranteed. We present such a case in Appendix \ref{sec:appTIH}. We also remark that in \textbf{General assumptions}~\ref{ass:intro} (\ref{ass:intro2}), the term $\delta^{\alpha}$ in the finite time convergence can be replaced by a general function $g:\mathbb{R}_+\to \mathbb{R}_+$ satisfying $g(\delta)\to 0$ as $\delta$ tends to zero which changes the bound in \eqref{eq:intro_uit} to $\tilde{\mathbf{C}}g(\delta)$. Moreover, the requirement of $\d$ being a distance can be relaxed to a pseudometric, since $\d(\nu,\mu)=0$ implies $\nu=\mu$ is not used in the proof.

 One particular motivation for a uniform in time bound such as \eqref{eq:intro_uit} is so that one can commute the limits of time $t$ and of the approximation parameter $\delta$. That is, the invariant measure of the limiting process coincides with the limit of the invariant measure. An elementary way of viewing this is through the Moore-Osgood theorem from real analysis, which gives sufficient conditions for two limits to commute. This is relevant to the results in this paper and we discuss it in Remark \ref{rem:MO}.

\begin{figure}[ht]
\begin{tikzpicture}
\tikzstyle{every node}=[font=\tiny]

\draw[dashed, gray!50] (0,0) -- (0, 6);
\draw[dashed, gray!50] (2,0) -- (2, 6);
\draw[dashed, gray!50] (4,0) -- (4, 6);
\draw[dashed, gray!50] (6,0) -- (6, 6);
\draw[dashed, gray!50] (8,0) -- (8, 6);
\draw[dashed, gray!50] (10,0) -- (10, 6);
\draw[dashed, gray!50] (12,0) -- (12, 6);
\draw[dashed, gray!50] (14,0) -- (14, 6);

\draw[decorate,decoration={snake,amplitude=0.5pt}, very thick, ->, blue] (0,5) -- (14,5);
\coordinate[label=above: {$\nu$} ] (A) at (0,5);
\coordinate[label=right: {$\nu p_{t}$} ] (B) at (14,5);
\coordinate[label=right: {$\nu p_{t}^{\delta}$} ] (C) at (14,2);
\fill (A) circle (2pt);
\fill (B) circle (2pt);
\fill (C) circle (2pt);

\coordinate[label=above: {$i \tau$} ] (D) at (6,1);
\coordinate[label=above: {$j\tau$} ] (E) at (8,1);
\fill (D) circle (2pt);
\fill (E) circle (2pt);
\coordinate[label=above: {$0$} ] (X) at (0,1);
\coordinate[label=above: {$t=k\tau$} ] (Y) at (14,1);
\fill (X) circle (2pt);
\fill (Y) circle (2pt);

\draw[decorate,decoration={snake,amplitude=0.5pt},->, red!50, very thick] (A) to[bend right=10] node[above] { } (C);
\draw[decorate,decoration={snake,amplitude=0.5pt},->, red!50, very thick] (6,5) to[bend right=10] node[above] { } (14,3);
\draw[decorate,decoration={snake,amplitude=0.5pt},->, red!50, very thick] (8,5) to[bend right=10] node[above] { } (14,3.5);

\draw [decorate,decoration={brace,amplitude=5pt,mirror}]
  (8,5) -- (8,4.2) node[midway,xshift=-1.3em, black!70]{$O(\delta^{\alpha})$};
  
\draw [decorate,decoration={brace,amplitude=5pt,mirror}]
  (14,3) -- (14,3.5) node[midway,xshift=3.3em, black!70]{};
\coordinate[label={[black!70]right: {$e^{-\lambda(k-j)\tau} O(\delta^{\alpha})$} }] (Z) at (14.1,3.3);

\coordinate[label=above: {$\nu p_{\tau j}$} ] (F) at (8,5);
\fill (F) circle (2pt);
\coordinate[label=below: {$\nu p_{\tau i} p_{\tau(j-i)}^{\delta}$} ] (G) at (8,4.2);
\fill (G) circle (2pt);  

\coordinate[label=right: {$\nu p_{\tau j}p_{\tau (k-j)}^{\delta}$} ] (H) at (14,3.7);
\fill (14, 3.5) circle (2pt);
\coordinate[label=right: {$\nu p_{\tau i} p_{\tau(k-i)}^{\delta}$} ] (I) at (14,2.8);
\fill (14,3) circle (2pt);

\draw [decorate,decoration={brace,amplitude=5pt,mirror,raise=4ex}]
  (0,1) -- (6,1) node[midway,yshift=-3em]{uniform control};
\draw [decorate,decoration={brace,amplitude=5pt,mirror,raise=4ex}]
  (6,1) -- (8,1) node[midway,yshift=-3em]{local error};
\draw [decorate,decoration={brace,amplitude=5pt,mirror,raise=4ex}]
  (8,1) -- (14,1) node[midway,yshift=-3em]{contractivity};
\draw[->] (0,1) -- (14, 1);
\coordinate[label=below: {time} ] (Z) at (0.3,1);

\end{tikzpicture}
\caption{{\footnotesize Comparison of two neighbouring paths in the telescoping sum and the effect of the three assumptions. The distance between all neighbouring paths is sumable and of order $\mathcal{O}(\delta^{\alpha})$. Note that in $\mathcal{O}(\delta^{\alpha})$ the uniform control $\mathbf{C}(\nu)$ is hidden.}} \label{figure1}
\end{figure}
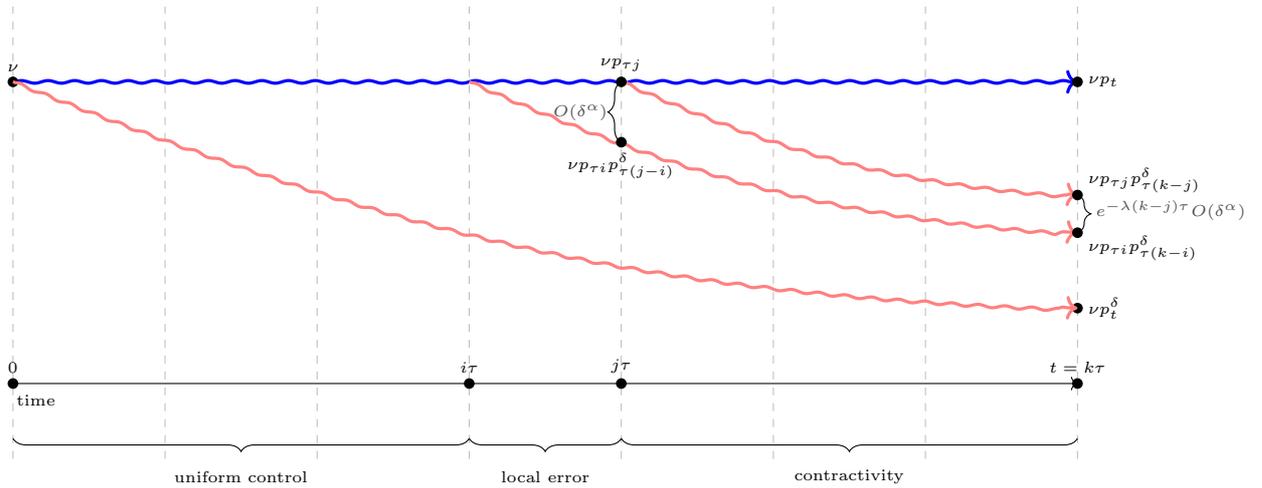

\subsection{Literature review}\label{sec:litrev}
Such is the breadth of the approximations we consider here (which, we recall, are the method of averaging for SDEs, numerical discretisations of SDEs, and mean-field limits of SDEs) and their corresponding bodies of literature, we give a non-exhaustive literature review for each. We introduce the specific notation and set-up in the respective sections, however we collect the literature review here for the ease of the reader. We emphasise again that there are existing uniform in time results (in the sense of Theorem \ref{thm:intro}) for each of the settings considered here.

There is a vast literature on the convergence of multiscale methods over finite time horizons (i.e. obtaining bounds of the form \ref{eqn:introFTH}), and we make no claim to be complete here. For a broad overview, see \cite{weinan2011principles, pavliotis2008multiscale}. See e.g. \cite{rockner2019strong,rockner2020diffusion,pardoux2001, pardoux2003} for weak error results on averaging and homogenisation on systems of SDEs in Euclidean space. Strong error results on averaging in a variety of settings can be found in \cite{rockner2019strong,hong2022strong,brehier2012strong}. The literature for uniform in time results in the case of multiscale methods (averaging for SDEs being the specific context we consider here) is smaller than in the case of numerical discretisations and propagation of chaos. For the averaging method for SDEs in Euclidean space specifically, and the results which most closely resemble those we obtain here, see \cite{crisan2024}. Specifically, Theorem \ref{thm:uit_avg_semi} given here is very similar to \cite[Theorem 3.2]{crisan2024}, albeit we present a different method of proof. There are some results for systems in the case of deterministic dynamics, see \cite{ilyin1998global} and references therein. See also \cite{flandoli, hairer2010simple} for uniform in time bounds obtained in other multiscale settings. We also flag \cite{stoltz2018longtime}, which produces a bound between the invariant measures of the coupled process and the averaged process, specifically for Langevin dynamics with a compact state space. Finally, see e.g. \cite{pavliotis2008multiscale} for the motivation of uniform in time bounds for the method of averaging.

%For the setting of numerical discretisations, there are numerous works. There is a vast literature on studying bounds between the invariant measure of a system and its numerical approximation, see for example ****. For specifically uniform in time bounds, we refer to \cite{uitEuler,Feng_2021}.
%ULA \cite{liu2025}

For the setting of numerical discretisations, there are numerous works. As we mainly consider approximations of solutions to stochastic differential equations, we focus on them. An overview on the analysis of finite time weak and strong errors for numerical schemes including the Euler-Milstein scheme is provided in e.g. the textbooks \cite{milstein2004, kloeden1992}.
Considering the uniform in time error analysis, conditions on the derivative estimates are established in \cite{uitEuler,Feng_2021} to obtain specifically uniform in time weak error bounds for first order SDEs. 
There is a vast literature on studying bounds between the invariant measure of a system and its numerical approximation. For an analysis in a general framework we refer to the work by Durmus and Eberle \cite{durmus2024}, where they provide asymptotic bounds for the convergence to a given target measure (see also Section~\ref{sec:invmeas} here). This work was one of the original motivations to obtain uniform in time strong error bounds for the numerical approximation. 
For bounds between the unadjusted Langevin algorithm (ULA) to approximate first order SDEs and the invariant measure of the original SDE see \cite{dalalyan2017, durmus2017, durmus2019, Majka2020, pages2023, liu2025}.
The asymptotic analysis of kinetic samplers forming approximations of second-order SDEs are studied in \cite{leimkuhler2024, monmarche2021, sanzserna2021, schuhwhalley2024}.
%For specifically uniform in time weak error bounds for first order SDEs, we refer to \cite{uitEuler,Feng_2021}.
Finally also for time-discrete Markov chains convergence of numerical implementable approximation is studied, as for the Hamilton Monte Carlo method \cite{duane1987,neal2011}. Both the convergence to equilibrium and bounds on the asymptotic bias for the unadjusted numerically implementable algorithm are analysed in \cite{mangoubi2018, bourabee2020, bourabee2023a, bourabee2023b, Chak2023}.

Turning to the uniform in time convergence for mean-field particle systems, this phenomena is referred to as propagation of chaos. 
%The notation was originally used by Kac for the mathematical description of the connection between the Boltzmann equation modeling large interacting gas particle systems and the nonlinear Liouville equation \cite{kac1956}. Later on it was adopted to other classes of particle systems and their nonlinear counter part as for example the so-called McKean-Vlasov diffusion which we are particularly interested in. 
This notion, which describes that under suitable conditions the particles are approximately independent, was originally introduced by Kac \cite{kac1956}, see also the work by McKean \cite{mckean1967} and the latter works by Sznitman \cite{sznitman1991} and Méléard \cite{meleard1996}. They studied the so-called McKean-Vlasov diffusion which we are particularly interested in and provided propagation of chaos bounds for finite time intervals.
%The McKean-Vlasov diffusion was studied first by McKean \cite{mckean1967} and later by Sznitman \cite{sznitman1991} and Méléard \cite{meleard1996} who provided bounds for finite time intervals. If this phenomenon is not only verified for finite time intervals but uniform in time bounds between the particle system and the nonlinear equation are proven in some appropriate sense, we say that uniform in time propagation of chaos hold. 
%Here, we mainly focus on uniform in time results for the McKean-Vlasov diffusion. %let us shortly review propagation of chaos for these diffusion. 
In \cite{malrieu2001} uniform in time bounds are established in $L^2$ Wasserstein distance for the convex setting.  For the non-convex case \cite{durmus2020} used a reflection coupling approach to show uniform in time propagation in $L^1$ Wasserstein distance. Uniform in time propagation of chaos for Boltzmann collision processes is also studied by Mischler and Mouhot in \cite{mischler2013kac} and later by Heydecker \cite{heydecker2019pathwise} among others. 
For further results on mean-field particle systems we refer to \cite{chaintron2022a, chaintron2022b} and the literature therein.

Finally, see \cite{del2019backward} and references therein for a general result on uniform in time bounds between SDEs using a forward-backward stochastic interpolation formula.

\subsection{Convergence to invariant measure}\label{sec:invmeas}
We now make a point to place our results in the context of the literature studying long-time behaviour. As mentioned above, this type of result is well-studied in the setting of numerical discretisations (see for instance \cite{Feng_2021, leimkuhler2024}), and is obtained for the setting of averaging in \cite{stoltz2018longtime}. Formally, suppose one has the same setting as above, with a process $(X_t)_{t\ge 0}$ which has unique invariant measure $\mu_\infty$ and a process $(X_t^\delta)_{t \ge 0}$ with unique invariant measure $\mu_\infty^\delta$ (which is dependent on $\delta$). Now assume that the \emph{finite time convergence} assumption (\textbf{General assumptions 1} (\ref{ass:intro2})) holds for all $\tau$ (where $\mathbf{M}$ may now depend on $\tau$), and further that we know
\begin{equation}\label{eqn:convInvMeas}
    \d (\mu_\infty, \mu_\infty^\delta) \leq C\delta^\alpha
\end{equation}
for some $\alpha>0$. We note here that one can obtain \eqref{eqn:convInvMeas} and the modified version of \textbf{General assumptions 1} (\ref{ass:intro2}) from the three assumptions in \textbf{General assumptions 1}, see for example \cite[Remark 6.3]{matttingly2010} and \cite[Lemma 1]{durmus2024}, but this is not the focus of the argument presented here. Then a simple application of the triangle inequality gives
\begin{align}\label{eqn:triIneq}
\begin{split}
        \d(\nu p_t, \nu p_t^{\delta}) &\le  \d(\mu_\infty^\delta, \nu p_t^{\delta}) + \d(\nu p_t, \mu_\infty) + \d (\mu_\infty, \mu_\infty^\delta) \\
        &\le C\delta^\alpha + e^{-\lambda t}\, ,
\end{split}
\end{align}
which means, given any tolerance $\epsilon$ we can take $t$ large enough and $\delta$ small enough such that $\d(\nu p_t, \nu p_t^{\delta})\le \epsilon$. This asymptotic convergence is fine for many cases (in particular, it is enough to be able to commute the limits of $\delta \rightarrow 0$ and $t \rightarrow \infty$, see Remark \ref{rem:MO} for more on this), but it is not uniform in time convergence. That is, it does not give you a bound such as \eqref{eqn:introuit}. Such a bound is useful if taking certain asymptotics, as one can perform a transformation of time $t$ and retain the convergence in $\delta$.

\section{Multiscale methods}\label{sec:multiscalemethods}

\subsection{Convergence result}

Consider the following slow-fast system
	\begin{align}
	\rmd X_t^\delta &=  b(X_t^\delta, Y_t^{\delta}) \rmd t + \sigma(X_t^\delta, Y_t^{\delta}) \, \rmd W_t \label{slow} \\ 
	\rmd Y_t^\delta &= \frac{1}{\delta}g(X_t^\delta,Y_t^\delta) \rmd t + \frac{1}{\sqrt{\delta}} a(X_t^\delta, Y_t^\delta)\, \rmd B_t  \label{fast} 
	\end{align}
	with initial datum $(X_0, Y_0)=(x,y)\in \mathbb{R}^n \times \mathbb{R}^d$. Here $\delta$ is a small parameter,  $(X_t^\delta, Y_t^{\delta}) \in \mathbb{R}^n \times\mathbb{R}^d$, and $W_t$ and $B_t$, respectively, are  $n-$dimensional and a $d-$dimensional standard Brownian motion, respectively, which we take to be independent of each other.
In \eqref{slow}-\eqref{fast}, the parameter $\delta$ determines how fast the dynamics of $Y_{t}^{\delta}$ is compared to $X_{t}^{\delta}$. When $\delta \ll 1$, which is the regime in which we are interested, the process $Y^\delta_t$ evolves faster than $X^\delta_t$. For this reason, $X^\delta_t$ and $Y^\delta_t$ are commonly referred to as the slow and fast process respectively. An important object when studying averaging is the frozen process, which can informally be defined as the fast process \eqref{fast} with $\delta = 1$ and fixed slow dynamics $X_t^{\delta} = x$, though a more careful description can be found e.g. in \cite{crisan2024}. Under the condition that the frozen process associated to \eqref{fast} has a unique invariant measure $\mu_\infty (x; \rmd y)$ for each $x$, the method of averaging allows one to prove that the process $X_{t}^{\delta}$ converges as $\delta \rightarrow 0$ to the process $\bar X_t$, given by the following $\mathbb{R}^n$-valued SDE:
	\begin{equation}\label{eqn:averaged}
	\rmd \bar X_t = \bar b(\bar X_t) \rmd t +  \bar \sigma(\bar X_t) \rmd W_t    \end{equation}
	where 
	\begin{equation}\label{avgDef}
	\bar{b}(x):= \int_{\mathbb{R}^d} b(x,y) \mu_\infty (x; \rmd y), \quad  \bar{\sigma}(x):= \int_{\mathbb{R}^d} \sigma(x, y) \mu_\infty (x; \rmd y) \, .
	\end{equation}

  In the following, we consider metrics denoted by $\d$ which are independent of the part of the measures acting on $\mathbb{R}^d$.

Given a measure $\nu$ on $\mathbb{R}^n\times \mathbb{R}^d$, we denote by $\nu^x(x)=\int_{\mathbb{R}^d} \nu(x,y) \rmd y$ the marginal distribution on $\mathbb{R}^n$.

\begin{assumption}\label{ass_avg_meas} We impose
\begin{enumerate}
    \item \label{ass_avg_meas_Con} (Contractivity for the limit process) There exists $\lambda>0$ and $C<\infty$ such that for all probability measures $\nu, \eta$ on $\mathbb{R}^n$ and $t>0$ it holds
    \begin{align*}
        \d(\nu \bar{p}_t, \eta \bar{p}_t)\le C e^{-\lambda t} \d(\nu, \eta)\,.
    \end{align*}
    \item \label{ass_avg_meas_FTC}  (Finite time convergence) There exists a finite time $\tau>0$, a constant $\alpha>0$ and a function $\mathbf{M}:\mathcal{P}(\mathbb{R}^n\times \mathbb{R}^d)\to[0,\infty)$ such that for all probability measures $\nu\in \mathcal{P}(\mathbb{R}^n\times \mathbb{R}^d)$ and any $\delta>0$ it holds
    \begin{align*}
        \sup_{t\in[0,\tau]} \d ( \nu^x\bar{p}_{t}, (\nu p_{t}^{\delta})^x)\le \delta^{\alpha} \mathbf{M}(\nu)\,.
    \end{align*}
    \item \label{ass_avg_meas_UMB} (Uniform bound for the coupled process) For all probability measures $\nu \in\mathcal{P}(\mathbb{R}^n\times \mathbb{R}^d)$, there exists a constant $\mathbf{C}(\nu)< \infty$ such that for all $\delta>0$
    \begin{align*}
        \sup_{t\ge 0} \mathbf{M}(\nu p_t^{\delta}) \le \mathbf{C}(\nu)\,.
    \end{align*}
\end{enumerate}    
\end{assumption}
\begin{theorem}\label{thm:avg_meas}
    Suppose \Cref{ass_avg_meas} holds. Let $\nu$ be a probability measure on $\mathbb{R}^n\times \mathbb{R}^d$. Then there exists a constant $\tilde{\mathbf{C}}>0$ such that for all $t\ge 0$ and $\delta>0$,
    \begin{align}
        \d (\nu^x\bar{p}_t, (\nu p_t^{\delta})^x)\le \tilde{\mathbf{C}}\delta^{\alpha}\,.
    \end{align}
    The constant $\tilde{\mathbf{C}}$ depends on the initial distribution $\nu$ and the constants $C$, $\lambda$ and $\tau$ given in \Cref{ass_avg_meas}.
\end{theorem}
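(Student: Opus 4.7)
The plan is to follow the telescoping-sum template from the sketch of Theorem~\ref{thm:intro}, adapted to the averaging setting where only the $x$-marginal is compared and where the time $t$ may not be an integer multiple of $\tau$. Fix $t > 0$ and write $t = k\tau + s$ with $k \in \mathbb{N}_0$ and $s \in [0,\tau)$. The intermediate measures I would interpolate between are the $x$-marginals of trajectories which first evolve under the coupled semigroup $p^\delta$ for time $i\tau$ and then under the averaged semigroup $\bar p$ for the remaining time. Concretely, for $i = 0,\dots,k$ set $\mu_i := (\nu p_{i\tau}^\delta)^x \bar p_{t - i\tau}$, so that $\mu_0 = \nu^x \bar p_t$ and $\mu_k = (\nu p_{k\tau}^\delta)^x \bar p_s$. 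Then by the triangle inequality,
\begin{equation*}
    \d(\nu^x \bar p_t, (\nu p_t^\delta)^x) \le \sum_{i=0}^{k-1} \d(\mu_i, \mu_{i+1}) + \d(\mu_k, (\nu p_t^\delta)^x).
\end{equation*}

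For the telescoped terms, I would use the semigroup property $\bar p_{t - i\tau} = \bar p_\tau \bar p_{t-(i+1)\tau}$ to rewrite
\begin{equation*}
    \d(\mu_i, \mu_{i+1}) = \d\bigl((\nu p_{i\tau}^\delta)^x \bar p_\tau \bar p_{t - (i+1)\tau},\ (\nu p_{i\tau}^\delta p_\tau^\delta)^x \bar p_{t - (i+1)\tau}\bigr),
\end{equation*}
then apply the contractivity assumption (\ref{ass_avg_meas_Con}) over time $t - (i+1)\tau = (k-i-1)\tau + s$, followed by the finite time convergence assumption (\ref{ass_avg_meas_FTC}) applied to the initial measure $\nu p_{i\tau}^\delta$ at time $\tau$, and finally the uniform control (\ref{ass_avg_meas_UMB}) to absorb $\mathbf{M}(\nu p_{i\tau}^\delta) \le \mathbf{C}(\nu)$. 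This yields a bound of the form $C e^{-\lambda((k-i-1)\tau + s)} \delta^\alpha \mathbf{C}(\nu)$, whose sum over $i$ is controlled by a geometric series with ratio $e^{-\lambda\tau} < 1$.

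For the remainder term $\d(\mu_k, (\nu p_t^\delta)^x)$, I would observe that $(\nu p_t^\delta)^x = (\nu p_{k\tau}^\delta p_s^\delta)^x$, so that assumption (\ref{ass_avg_meas_FTC}) applied to the initial measure $\nu p_{k\tau}^\delta$ at time $s \le \tau$ bounds this directly by $\delta^\alpha \mathbf{M}(\nu p_{k\tau}^\delta) \le \delta^\alpha \mathbf{C}(\nu)$. Combining the two estimates gives a total bound of the form
\begin{equation*}
    \d(\nu^x \bar p_t, (\nu p_t^\delta)^x) \le \delta^\alpha \mathbf{C}(\nu) \left( \frac{C}{1 - e^{-\lambda\tau}} + 1 \right),
\end{equation*}
which is independent of $k$ and hence of $t$, yielding the claim.

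The main subtlety, rather than outright obstacle, is the bookkeeping around the $x$-marginal: assumption (\ref{ass_avg_meas_FTC}) is stated directly in terms of the marginal $\d(\nu^x \bar p_\tau, (\nu p_\tau^\delta)^x)$, which is crucial because the averaged process lives on $\mathbb{R}^n$ while the coupled process lives on $\mathbb{R}^n \times \mathbb{R}^d$, and one must check that the assumption on $\d$ being independent of the $\mathbb{R}^d$-component (stated just before Assumption~\ref{ass_avg_meas}) makes each term in the telescoping sum well-defined. A second minor point is that the contractivity assumption (\ref{ass_avg_meas_Con}) carries an extra multiplicative constant $C$ absent from Theorem~\ref{thm:intro}, but this is harmlessly absorbed into $\tilde{\mathbf{C}}$. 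Finally, the non-integer remainder $s \in [0,\tau)$ is handled uniformly by the finite time convergence bound since it is valid on the full interval $[0,\tau]$.
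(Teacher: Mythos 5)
Your proposal is correct and follows essentially the same argument as the paper: the same telescoping decomposition into interpolating measures $(\nu p_{i\tau}^\delta)^x\bar p_{(k-i)\tau+s}$, the same ordering of contractivity, finite-time convergence, and uniform control, the same geometric-series bound, and the same handling of the non-integer remainder $s\in[0,\tau)$ via the finite-time bound on the last term. The only difference is a harmless re-indexing of the sum (you index by $i$ counting coupled steps forward, the paper by $i$ counting contractive steps backward), which produces the identical final constant.
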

\begin{proof}
    Let $\delta>0$ and $t=k\tau+m$ for some $k\in\mathbb{N}$ and $m\in [0,\tau)$. Let $\nu\in\mathbb{R}^n\times \mathbb{R}^d$. Then, using the telescoping sum and triangle inequality, we obtain 
    \begin{align*}
        &\d\Big( \nu^x \bar{p}_t, (\nu p_t^{\delta})^x \Big)
        \\ & \le \d\Big( \nu^x \bar{p}_{k\tau}\bar{p}_m, (\nu p_{k\tau}^{\delta})^x \bar{p}_m \Big)+\d\Big( (\nu p_{k\tau}^{\delta})^x \bar{p}_m, (\nu p_t^{\delta})^x \Big)
        \\ & \le \sum_{i=0}^{k-1} \d\Big( (\nu p_{(k-i-1)\tau}^{\delta})^x\bar{p}_{(i+1)\tau+m},(\nu p_{(k-i)\tau}^{\delta})^x \bar{p}_{i\tau+m} \Big)+\d\Big( (\nu p_{k\tau}^{\delta})^x \bar{p}_m, (\nu p_t^{\delta})^x \Big)\, . 
    \end{align*}
    By \Cref{ass_avg_meas}(\ref{ass_avg_meas_FTC})-(\ref{ass_avg_meas_UMB}), the last term is bounded by 
    \begin{align*}
        \d\Big( (\nu p_{k\tau}^{\delta})^x \bar{p}_m, (\nu p_t^{\delta})^x \Big)\le \delta^{\alpha} \mathbf{M}(\nu p_{k\tau}^{\delta})\le \delta^{\alpha} \mathbf{C}(\nu)\,,
    \end{align*}
    while applying iteratively \Cref{ass_avg_meas}(\ref{ass_avg_meas_Con}), \Cref{ass_avg_meas}(\ref{ass_avg_meas_FTC}) and \Cref{ass_avg_meas}(\ref{ass_avg_meas_UMB}), the sum is bounded by 
    \begin{align*}
        &\sum_{i=0}^{k-1} \d\Big( (\nu p_{(k-i-1)\tau}^{\delta})^x\bar{p}_{(i+1)\tau+m},(\nu p_{(k-i)\tau}^{\delta})^x \bar{p}_{i\tau+m} \Big)
        \\ & \le \sum_{i=0}^{k-1} C e^{-\lambda (i\tau +m)} \d\Big((\nu p_{(k-i-1)\tau}^{\delta})^x\bar{p}_{\tau} ,  (\nu p_{(k-i)\tau}^{\delta})^x \Big)
        \\ & \le \sum_{i=0}^{k-1} C e^{-\lambda (i\tau +m)} \delta^{\alpha} \mathbf{M}(\nu p_{(k-i-1)\tau}^{\delta}) \le \sum_{i=0}^{k-1} C e^{-\lambda (i\tau +m)} \delta^{\alpha} \mathbf{C}(\nu) \le C\mathbf{C}(\nu) \delta^{\alpha} \frac{e^{-\lambda m}}{1-e^{-\lambda \tau}}\, .
    \end{align*}
    Hence, we obtain
    \begin{align*}
        &\d\Big( \nu^x \bar{p}_t, (\nu p_t^{\delta})^x \Big)\le \delta^{\alpha} \mathbf{C}(\nu) \Big(1 + C\frac{1}{1-e^{-\lambda \tau}} \Big)\, .
    \end{align*}
\end{proof}

Given two processes $(\bar{X}_t)_{t\ge 0}$, $(\bar{X}_t')_{t\ge 0}$ given by \eqref{eqn:averaged} with initial values $\bar{X}_0=x$ and $\bar{X}_0'=x'$ we write $\mathbb{E}_{(x,x')}[|\bar{X}_t-\bar{X}_t'|^2]=\mathbb{E}[|\bar{X}_t-\bar{X}_t'|^2|\bar{X}_0=x, \bar{X}_0'=x']$. % for two processes $(\bar{X}_t^1)_{t\ge 0}$, $(\bar{X}_t^2)_{t\ge 0}$ given by \eqref{eqn:averaged} with different initial values
Further, we write $\mathbb{E}_{(x,y)}[|X_t^{\delta}-\bar{X}_t|^2]=\mathbb{E}[|X_t^{\delta}-\bar{X}_t|^2 |(X_0^{\delta}, Y_0^\delta)=(x,y), \bar{X}_0=x]$.
Then, modifying Assumption~\ref{ass_avg_meas} slightly, we obtain a uniform in time strong error bound.

\begin{corollary}[Uniform in time strong error] \label{cor:avg_strongerror}
    Assume that there exists constants $\lambda, C\in (0,\infty)$ such that for all initial conditions $x,x' \in \mathbb{R}^n$ and $t\ge 0$ it holds $\mathbb{E}_{(x,x')}[|\bar{X}_t-\bar{X}_t'|^2]^{1/2}\le C e^{-\lambda t}|x-x'|$. Assume that there exists a constant $\alpha>0$, a finite time $\tau >0$ and a function $\mathbf{M}:\mathbb{R}^{n}\times\mathbb{R}^d\to [0,\infty)$ independent of $\delta$ such that for all $(x,y)\in \mathbb{R}^n\times \mathbb{R}^d$ the local strong error satisfies $\sup_{t\le \tau} \mathbb{E}_{(x,y)}[|X_t^{\delta}-\bar{X}_t|^2]^{1/2}\le \delta^{\alpha} \mathbf{M}(x,y)$. %Here $\delta_{(x,y)}$ denotes the Dirac measure at $(x,y)$ in $\mathbb{R}^n\times\mathbb{R}^d$.
    Further suppose that for all $(x,y)\in\mathbb{R}^n\times \mathbb{R}^d$ there exists a constant $\mathbf{C}(x,y)$ such that for the process $(X_t^{\delta},Y_t^{\delta})$ starting in $(x,y)$ it holds $\sup_{t\ge 0} \mathbb{E}_{(x,y)}[\mathbf{M}((X_t^{\delta}, Y_t^{\delta}))]\le \mathbf{C}(x,y)$. Then, for all $(x,y)\in\mathbb{R}^n\times\mathbb{R}^d$ there exists a constant $\tilde{\mathbf{C}}>0$ such that
    \begin{align*}
        \sup_{t\ge 0} \mathbb{E}_{(x,y)}[|X_t^{\delta}-\bar{X}_t|^2]^{1/2}\le \tilde{\mathbf{C}}\delta^{\alpha}\, .
    \end{align*}
\end{corollary}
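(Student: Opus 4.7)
The plan is to mirror the telescoping argument of \Cref{thm:avg_meas} at the level of square-integrable random variables, using a synchronous coupling driven by the common Brownian motion $W$. Fix $(x,y)\in\mathbb{R}^n\times\mathbb{R}^d$, write $t = k\tau + m$ with $k\in\mathbb{N}$ and $m \in [0,\tau)$, and introduce intermediate processes $Z^{(0)}, Z^{(1)}, \ldots, Z^{(k)}$ defined so that $Z_s^{(i)}$ follows the coupled dynamics $X_s^\delta$ for $s \le i\tau$ and, for $s > i\tau$, runs under the averaged equation \eqref{eqn:averaged} started from $X_{i\tau}^\delta$ and driven by the same Brownian motion $W$. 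In particular $Z_t^{(0)} = \bar X_t$, while $Z^{(k)}$ deviates from $X^\delta$ only on the terminal interval $[k\tau, t]$ of length $m\le \tau$.

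Applying Minkowski's inequality to the telescoping identity
\begin{equation*}
    X_t^\delta - \bar X_t = \bigl(X_t^\delta - Z_t^{(k)}\bigr) + \sum_{i=0}^{k-1}\bigl(Z_t^{(i+1)} - Z_t^{(i)}\bigr)
\end{equation*}
reduces the task to estimating each summand in $L^2$. For the generic summand, $Z^{(i)}$ and $Z^{(i+1)}$ coincide on $[0,i\tau]$; on $[i\tau,(i+1)\tau]$ they start from the same state $X_{i\tau}^\delta$ with one obeying the slow--fast system and the other the averaged SDE; on $[(i+1)\tau, t]$ both evolve under the averaged dynamics driven by the same $W$. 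The plan is: first condition on $\mathcal{F}_{(i+1)\tau}$ and use the synchronous-coupling contractivity to pick up a factor $Ce^{-\lambda(t-(i+1)\tau)}$ multiplying $|Z_{(i+1)\tau}^{(i+1)} - Z_{(i+1)\tau}^{(i)}|$; then condition on $\mathcal{F}_{i\tau}$ and invoke the local strong error (applied over a time interval of length $\tau$ starting from the Markov state $(X_{i\tau}^\delta,Y_{i\tau}^\delta)$) to bound the latter by $\delta^\alpha \mathbf{M}(X_{i\tau}^\delta,Y_{i\tau}^\delta)$. The residual term $\|X_t^\delta - Z_t^{(k)}\|_{L^2}$ is handled by a single application of the local strong error on $[k\tau,t]$ (length $m \le \tau$).

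Taking outer expectations and using the uniform moment assumption on $\mathbf{M}$ along trajectories of $(X^\delta, Y^\delta)$ controls each $\mathbb{E}_{(x,y)}[\mathbf{M}(X_{i\tau}^\delta, Y_{i\tau}^\delta)^2]^{1/2}$ by a constant depending only on $(x,y)$; as in the measure-level theorem, the uniform-control hypothesis is naturally read on the $L^2$ version of $\mathbf{M}$ so as to match the Minkowski step. The remaining geometric sum $\sum_{i=0}^{k-1} e^{-\lambda(t-(i+1)\tau)} \le (1-e^{-\lambda\tau})^{-1}$ is uniform in $k$, yielding $\tilde{\mathbf{C}}$ independent of $t$ and $\delta$ and completing the argument in close analogy with the closing step of the proof of \Cref{thm:avg_meas}.

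The main obstacle is the synchronous-coupling bookkeeping: one must verify that the contractivity estimate assumed in the hypothesis is realised by a coupling compatible with the intermediate processes $Z^{(i)}$ (this is automatic for synchronous coupling, since the estimate then passes through conditional expectations), and reconcile the first-moment form of the uniform-control assumption with the second-moment quantities produced by the $L^2$ telescoping. Once these points are settled, the rest of the argument is a faithful pathwise analogue of the measure-level proof above.
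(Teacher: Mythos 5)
Your pathwise telescoping argument is the right one, and it matches the paper's intent: the paper's proof of this corollary consists of a single line saying it works analogously to the proof of \Cref{thm:avg_meas}, and the construction of the intermediate processes $Z^{(0)},\dots,Z^{(k)}$ that switch from the slow--fast to the averaged dynamics at multiples of $\tau$ (driven by a common Brownian motion) is exactly the pathwise analogue of composing $\bar p_s$ with $p_s^\delta$ at the measure level. The conditional use of contractivity on $[(i+1)\tau,t]$ followed by the local strong error on $[i\tau,(i+1)\tau]$, and a residual local-error term on $[k\tau,t]$, reproduces the telescoping in \Cref{thm:avg_meas} verbatim, including the boundary term $\d\bigl((\nu p^\delta_{k\tau})^x\bar p_m,(\nu p^\delta_t)^x\bigr)$.

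However, the gap you flag at the end is genuine and is not closed by your proposal. The Minkowski step in $L^2$ together with the two conditionings produces, for each summand, a bound of the form $C e^{-\lambda(t-(i+1)\tau)}\,\delta^\alpha\,\mathbb{E}_{(x,y)}\bigl[\mathbf{M}(X^\delta_{i\tau},Y^\delta_{i\tau})^2\bigr]^{1/2}$. The uniform-control hypothesis in the corollary, though, bounds $\sup_{t\ge 0}\mathbb{E}_{(x,y)}\bigl[\mathbf{M}(X^\delta_t,Y^\delta_t)\bigr]$, i.e.\ the first moment of $\mathbf{M}$. By Jensen's inequality the $L^1$ control is \emph{implied by} the $L^2$ control, not the other way round, so $\mathbb{E}_{(x,y)}\bigl[\mathbf{M}^2\bigr]^{1/2}$ cannot in general be recovered from $\mathbb{E}_{(x,y)}[\mathbf{M}]$. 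Your remark that the uniform-control hypothesis should be ``naturally read on the $L^2$ version of $\mathbf{M}$'' is not a proof step; it is a strengthening of the hypothesis. To be consistent with \Cref{thm:avg_meas} in $L^2$ Wasserstein, the measure-level $\mathbf{M}(\nu)$ there corresponds to $\bigl(\int\mathbf{M}(x,y)^2\,\nu(\rmd x\,\rmd y)\bigr)^{1/2}$, and the uniform control in the corollary should correspondingly read $\sup_{t\ge 0}\mathbb{E}_{(x,y)}\bigl[\mathbf{M}(X^\delta_t,Y^\delta_t)^2\bigr]^{1/2}\le\mathbf{C}(x,y)$. Either state explicitly that you are using this (arguably intended) second-moment form of the hypothesis, or supply an additional argument under which the first-moment form suffices (for example $\mathbf{M}$ bounded, or a reverse moment inequality for the particular $\mathbf{M}$ at hand); otherwise the proof as written does not close.

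A minor point on the contractivity input: the hypothesis $\mathbb{E}_{(x,x')}\bigl[|\bar X_t-\bar X'_t|^2\bigr]^{1/2}\le Ce^{-\lambda t}|x-x'|$ presupposes a particular coupling of the two averaged trajectories. Your argument needs this coupling to be synchronous (same $W$) so that the estimate can be applied conditionally along the $Z^{(i)}$ constructions; you note this, which is correct, but it is worth stating it as a standing convention rather than as an afterthought, since an independent coupling would not satisfy the hypothesis at all.
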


\begin{proof}
    The proof works analogously to the proof of Theorem~\ref{thm:avg_meas}.
\end{proof}

\begin{remark}
    Corollary \ref{cor:avg_strongerror} gives a modified result showing strong convergence for the method of averaging. It is well known that, in general, when $\sigma$ in \eqref{slow}-\eqref{fast} is dependent on both $X_t^\delta$ and $Y_t^\delta$ (as opposed to just $X_t^\delta$), strong convergence does not hold for the method of averaging. See e.g. \cite[Chapter 2.3.2]{weinan2011principles} for some simple examples. We point out that to verify the modified Assumption \ref{ass_avg_meas} (1), one needs strong convergence over a finite time horizon, which itself would rule out $\sigma$ in \eqref{slow}-\eqref{fast} being dependent on both $X_t^\delta$ and $Y_t^\delta$, at least in general. Hence, this result is compatible with prior results, though the point that $\sigma$ in \eqref{slow}-\eqref{fast} must be independent of $Y_t^\delta$ to achieve uniform in time strong convergence is, in some sense, hidden.
\end{remark}

\begin{remark}\label{rem:relabelling}
Unlike with the more general Theorem \ref{thm:intro} presented in the introduction, in the setting of averaging we cannot simply relabel our processes and repeat the same proof, obtaining the conditions of Assumption \ref{ass_avg_meas} with the measures swapped. The reason for this is that the coupled process and hence its law are defined on different spaces to the averaged process (and its law). While the slow-fast system \eqref{slow}-\eqref{fast} is defined on $\R^n \times \R^d$, the averaged process \eqref{eqn:averaged} is defined on $\R^n$ only. This means that one cannot just compose the transition functions together when doing the telescoping sum in the proof of Theorem \ref{thm:avg_meas}. In other words, neither $\nu p_t^\delta p_t$ (for $\nu$ a probabilitiy measure defined on $\R^n \times \R^d$) nor $\nu p_t p_t^\delta$ (for $\nu$ a probabilitiy measure defined on $\R^n$)  make sense. Instead, one can marginalise out the fast variable defined on $\R^d$ to consider $ (\nu p_t^\delta)^x p_t$, for $\nu$ a probabilitiy measure defined on $\R^n \times \R^d$. This is what we do in the proof of Theorem \ref{thm:avg_meas}. On the other hand, there is no natural way of `expanding' the measure $\nu p_t$ to make sense of $\nu p_t p_t^\delta$, outside of the very specific case where one has dynamics which preserve an initial product measure structure.
\end{remark}

\begin{remark}[Commutativity of limits.]\label{rem:MO}
More generally, one can ask the following question: given a process $X_t$ and a family of processes $X^\delta_t$ indexed by $\delta>0$, where $X^\delta_t$ converges to $X_t$ in some metric as $\delta \rightarrow 0$, $X^\delta_t$ has an invariant measure $\mu^\delta$ and $X_t$ has an invariant measure $\mu$ (which in many settings are implied by Assumption~\ref{ass_avg_meas} (see also the description in Section~\ref{sec:invmeas}), then under which circumstances do the limits $\delta \rightarrow 0$ and $t \rightarrow \infty$ commute? That is, under what conditions does the following hold
\begin{equation*}
    \lim_{\delta\rightarrow 0} \lim_{t \rightarrow \infty} X^\delta_t =  \lim_{t \rightarrow \infty}\lim_{\delta\rightarrow 0}  X^\delta_t\, .
\end{equation*}
Of course, among other things, this depends on the metric that one is studying. Let us, rather, consider the convergence of the observables. That is we want to define conditions which imply
\begin{equation}\label{eqn:doublelimit}
    \lim_{\delta\rightarrow 0} \lim_{t \rightarrow \infty} \mathcal{P}^\delta_t f(x) =  \lim_{t \rightarrow \infty}\lim_{\delta\rightarrow 0}  \mathcal{P}^\delta_t f(x) = \lim_{t\rightarrow \infty, \delta \rightarrow 0} \mathcal{P}_t^\delta f(x)\, ,
\end{equation}
for some class of functions $f$, where $\mathcal{P}_t^\delta$ is the semigroup associated with $X^\delta_t$. To this end, the Moore-Osgood theorem (see for example \cite{rudin1964principles}) states that, for $x$ fixed, provided one of the convergences $ \lim_{t \rightarrow \infty} \mathcal{P}^\delta_t f(x) = \mu^\delta(f), \lim_{\delta\rightarrow 0} \mathcal{P}^\delta_t f(x) = \bar{\mathcal{P}}_t f(x)$ is uniform (in $\delta$ for the former and in $t$ for the latter), the double limit exists and we have that \eqref{eqn:doublelimit} holds. In fact, the Moore-Osgood theorem implies more than this, specifically that either both the convergences are uniform, or neither are. This means, for example, if the convergence in $\delta$ is uniform in $t$ then it is also true that the convergence in $t$ is uniform in $\delta$. While one does not obtain a rate with this reasoning, it provides a way of linking uniform in time convergence (in $\delta$) to uniform in $\delta$ convergence (in time).
\end{remark}

% \begin{figure}[h]
%     \centering
%     \[
% \begin{tikzcd}[sep = huge]
% \mu^{\delta}_t \arrow{r}{\text{$t\rightarrow \infty$}} \arrow{d}{\text{$\delta \rightarrow 0$}} &\mu^\delta \arrow{rd}{\text{$\delta \rightarrow 0$}} \arrow[dotted, dash]{d} \\%
% \bar{\mu}_t \arrow{r}{\text{$t\rightarrow \infty$}}& \bar{\mu} \arrow[dotted, dash]{r}{\text{=?}}& \mu^0 
% \end{tikzcd}
% \]
%     \caption{This diagram illustrates a problem that this paper addresses. That is, under which conditions the limits $\delta \rightarrow 0$ and $t\rightarrow \infty$ commute. This is relevant to many different approximations; here we address mainly two, one being numerical discretisations and the other being multiscale methods. In the latter, conventionally the $\delta$ is instead an $\epsilon$, which dictates the timescale difference.
%     It is clear that if there is uniform in time convergence, as studied in \cite{crisan2024}, then it is immediately true one can commute these limits. }
%     \label{fig:my_label}
% \end{figure}

Additionally, we can show uniform in time convergence for the semigroups of the coupled process and the averaged process.
Here we adapt the assumptions from the discretisation case in \cite{angeli2023uniform} to the multiscale framework. We first introduce some notation.

We let $\cP^\delta_t$ be the semigroup associated with the coupled process \eqref{slow}-\eqref{fast}, that is, 	$$
	(\cP^\delta_th)(x,y):= \mathbb{E} \left[h(X_t^\delta, Y_t^\delta) \vert (X_0, Y_0)=(x,y)\right] \,.
	$$
 We let $\bar \cP_t$ be the semigroup associated with the process \eqref{eqn:averaged}, i.e.
	$$
	(\bar{\cP}_t f)(x):= \mathbb{E}\left[f(\bar{X}_t)\vert \bar X_0 = x\right],  \qquad f \in \mathcal{C}_b(\mathbb{R}^n)\,. $$

  Let $\mu$ be the invariant measure of \eqref{slow}-\eqref{fast}, which we assume to exist and be unique, and $\bar{\mu}$ be the invariant measure of \eqref{eqn:averaged}, which again we assume to exist and be unique. Now we are ready to state our assumptions.

\begin{assumption}\label{ass_avg_semi} We impose
\begin{enumerate}
    \item \label{ass_avg_semiSES} (Strong exponential stability) There exists $\lambda>0$ and $C_0>0$ such that for all $t>0$ and $f\in \mathcal{C}_b^2(\mathbb{R}^n)$ with $\bar{\mu}(f) = 0$
    \begin{align*}
        \|\bar{\cP}_t f \|_{\mathcal{C}_b^2}\le C_0 \|f\|_{\mathcal{C}_b^2}e^{-\lambda t }\, . 
    \end{align*}
    \item \label{ass_avg_semiLEB} (Local error bound) There exists a positive function $\phi:\mathbb{R}^{n}\times \mathbb{R}^d\to [0,\infty)$ and constants $C_1>0$, $\alpha>0$ and $\tau>0$ such that for all $\delta>0$ and $(x,y)\in \mathbb{R}^n\times \mathbb{R}^d$ it holds
    \begin{align}
        |\mathbb{E}_{(x,y)}[f(\bar{X}_{\tau})-f(X_{\tau}^{\delta})]| \le C_1 \|f\|_{\mathcal{C}_b^2} \phi(x,y) \delta^{\alpha}\, .
    \end{align}
    \item \label{ass_avg_semiUC} (Uniform control) There exists a positive function $\Phi:\mathbb{R}^{n}\times \mathbb{R}^d\to [0,\infty)$  such that for all $(x,y)\in \mathbb{R}^n\times \mathbb{R}^d$ it holds
    \begin{align*}
        \sup_{t\ge 0}|\mathbb{E}_{(x,y)}[\phi(X_t^{\delta},Y_{t}^{\delta})]|\le \Phi(x,y)\, .
    \end{align*}
\end{enumerate}
\end{assumption}
\begin{remark}
    The proof of Theorem \ref{thm:uit_avg_semi} does not depend upon a specific norm. One can consider a more general norm $\| \cdot\|$, replacing the $\mathcal{C}_b^2$ norms throughout Assumption \ref{ass_avg_semi} and thus on the right hand side of Theorem \ref{thm:uit_avg_semi}. For verification purposes, we restrict to $\mathcal{C}_b^2$.
\end{remark}

  \begin{remark}
The difficulty in comparing the fully coupled dynamics to the averaged process, as also described in Remark \ref{rem:relabelling}, is that the former is defined on $\mathbb{R}^n \times \mathbb{R}^d$ while the latter is defined on $\mathbb{R}^n$. The averaging principle most often operates under the circumstances in which the slow variable, \ref{slow}, is the variable which captures the interesting behaviour, and hence the value of the fast variable \eqref{fast} is simply marginalised out. In e.g. \cite{crisan2024}, bounds are proven corresponding to the semigroup, which introduces the issue that the semigroups do not act on the same space of functions. To avoid this issue, only functions $f$ which are independent of the fast variable. i.e. functions for which $f(x,y) = f(x,y') \eqqcolon f(x)$ for all $x \in \mathbb{R}^n, y,y' \in \mathbb{R}^d$. We note that this is the same as marginalising. To see this in the case where there is a density (denoted $\rho^\delta_t(x,y ; x',y')$), write
\begin{align*}
    (\cP^\delta_t f )(x,y) &= \int_{\mathbb{R}^n} \int_{\mathbb{R}^d} f(x') \rho^\delta_t(x,y;x',y') \rmd y'\rmd x' \\
    &= \int_{\mathbb{R}^n} f(x')\int_{\mathbb{R}^d}  \rho^\delta_t(x,y;x',y') \rmd y'\rmd x' \\
    &=  \int_{\mathbb{R}^n} f(x') \tilde{\rho}^\delta_t(x,y;x')\rmd x' \\
    &\eqqcolon (\tilde{\cP}^{\delta, x}_t f )(x)
\end{align*}
where $\tilde{\rho}^\delta_t(x,y;x')$ is the marginalised density $\tilde{\rho}^\delta_t(x,y;x') \coloneqq \int_{\mathbb{R}^d}  \rho^\delta_t(x,y;x',y') \rmd y'$ and $\tilde{\cP}^{\delta, y}_t $ is the semigroup associated with the marginalised process.
  \end{remark}

\begin{theorem} \label{thm:uit_avg_semi}
    Suppose \Cref{ass_avg_semi} holds. Then for all $(x,y)\in \mathbb{R}^n\times \mathbb{R}^d$, $\delta>0$ and $t\ge 0$
    \begin{align*}
        |\mathbb{E}_{(x,y)}[f(X_t^{\delta})]-\mathbb{E}_x[f(\bar{X}_t)]|\le \frac{C(\|f\|_{\mathcal{C}_b^2}+ \bar{\mu}(f))\Phi(x,y)}{1-e^{-\lambda \tau}}
    \end{align*}
    with $C=C_1 (\max(1,C_0))$.
\end{theorem}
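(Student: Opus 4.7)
The plan is to run the telescoping-in-blocks-of-$\tau$ argument of Theorem~\ref{thm:avg_meas} at the level of semigroups, with strong exponential stability (Assumption~\ref{ass_avg_semi}(\ref{ass_avg_semiSES})), local error (Assumption~\ref{ass_avg_semi}(\ref{ass_avg_semiLEB})) and uniform control (Assumption~\ref{ass_avg_semi}(\ref{ass_avg_semiUC})) playing the roles of contractivity, finite-time convergence and uniform bound respectively.

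\textbf{Step 1: reduction to mean-zero observables.} Since $\bar{\mu}(f)$ is a constant, $\mathbb{E}_{(x,y)}[f(X_t^{\delta})]-\mathbb{E}_x[f(\bar{X}_t)] = \cP_t^{\delta}\tilde f(x,y) - \bar{\cP}_t\tilde f(x)$ where $\tilde f := f - \bar{\mu}(f)$ satisfies $\bar{\mu}(\tilde f)=0$ and $\|\tilde f\|_{\mathcal{C}_b^2}\le \|f\|_{\mathcal{C}_b^2}+|\bar{\mu}(f)|$. This accounts for the $\|f\|_{\mathcal{C}_b^2}+\bar{\mu}(f)$ factor in the conclusion and allows me to invoke strong exponential stability on $\bar{\cP}_s\tilde f$ at each intermediate time.

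\textbf{Step 2: telescope.} Write $t = k\tau+m$ with $k\in\mathbb{N}$, $m\in[0,\tau)$, and introduce the hybrid observables $A_i := \cP_{i\tau}^{\delta}\,\bar{\cP}_{t-i\tau}\tilde f$ for $i=0,\ldots,k$, so that $A_0=\bar{\cP}_t\tilde f$, $A_k=\cP_{k\tau}^{\delta}\bar{\cP}_m\tilde f$, and $A_{i+1}-A_i = \cP_{i\tau}^{\delta}(\cP_{\tau}^{\delta}-\bar{\cP}_{\tau})\bar{\cP}_{(k-i-1)\tau+m}\tilde f$. This yields the decomposition
\begin{equation*}
    \cP_t^{\delta}\tilde f - \bar{\cP}_t\tilde f = \bigl(\cP_t^{\delta}\tilde f - A_k\bigr) + \sum_{i=0}^{k-1}\cP_{i\tau}^{\delta}(\cP_{\tau}^{\delta}-\bar{\cP}_{\tau})\bar{\cP}_{(k-i-1)\tau+m}\tilde f.
\end{equation*}

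\textbf{Step 3: bound each increment.} For $g_i := \bar{\cP}_{(k-i-1)\tau+m}\tilde f$, Assumption~\ref{ass_avg_semi}(\ref{ass_avg_semiSES}) gives $\|g_i\|_{\mathcal{C}_b^2}\le C_0\|\tilde f\|_{\mathcal{C}_b^2}e^{-\lambda((k-i-1)\tau+m)}$. Applying the local error bound pointwise to $g_i$ gives $|(\cP_{\tau}^{\delta}-\bar{\cP}_{\tau})g_i(z,w)|\le C_1\|g_i\|_{\mathcal{C}_b^2}\phi(z,w)\delta^{\alpha}$, and composing with $\cP_{i\tau}^{\delta}$ (i.e.\ taking expectation against $(X_{i\tau}^{\delta},Y_{i\tau}^{\delta})$ from $(x,y)$) together with the uniform control yields
\begin{equation*}
    \bigl|\cP_{i\tau}^{\delta}(\cP_{\tau}^{\delta}-\bar{\cP}_{\tau})g_i(x,y)\bigr| \le C_0 C_1\|\tilde f\|_{\mathcal{C}_b^2}\Phi(x,y)\,\delta^{\alpha}\,e^{-\lambda((k-i-1)\tau+m)}.
\end{equation*}
Summing the geometric series in $i$ produces a bound of $C_0 C_1\|\tilde f\|_{\mathcal{C}_b^2}\Phi(x,y)\delta^{\alpha}/(1-e^{-\lambda\tau})$. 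The residue $\cP_t^{\delta}\tilde f - A_k = \cP_{k\tau}^{\delta}(\cP_m^{\delta}-\bar{\cP}_m)\tilde f$ is handled identically, applying the local error bound at time $m\in[0,\tau]$ and then the uniform control to obtain $C_1\|\tilde f\|_{\mathcal{C}_b^2}\Phi(x,y)\delta^{\alpha}$. Adding the two pieces gives a total constant of the form $C_1\max(1,C_0)/(1-e^{-\lambda\tau})$ up to harmless absolute factors, matching the claimed $C = C_1\max(1,C_0)$.

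The main obstacle I foresee is that Assumption~\ref{ass_avg_semi}(\ref{ass_avg_semiLEB}) is written at the single time $\tau$, whereas the telescope produces a short-time residue of length $m<\tau$. I would address this by reading the local error bound as holding uniformly on $[0,\tau]$—the natural formulation for the SDE-approximation setting and the exact analogue of Assumption~\ref{ass_avg_meas}(\ref{ass_avg_meas_FTC})—or, failing that, by imposing a separate short-time estimate for $(\cP_m^{\delta}-\bar{\cP}_m)\tilde f$ of the same $\phi\Phi$-structure. Once that is granted, everything else is a mechanical combination of the triangle inequality, the geometric sum, and the three standing assumptions.
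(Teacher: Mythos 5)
Your proof is correct and in substance the same as the paper's: the paper runs the identical block-of-$\tau$ argument, just packaged as an induction on $k$ rather than as an explicit telescoping sum. The decomposition you write down, $\cP_t^{\delta}\tilde f - \bar{\cP}_t\tilde f = \sum_i \cP_{i\tau}^{\delta}(\cP_{\tau}^{\delta}-\bar{\cP}_{\tau})\bar{\cP}_{t-(i+1)\tau}\tilde f$ plus a residue, is exactly what the paper's induction hypothesis \eqref{eq:ind_hypo} unrolls to once the uniform control $\cP_{i\tau}^{\delta}\phi \le \Phi$ is invoked; the paper merely postpones that last application of Assumption~\ref{ass_avg_semi}(\ref{ass_avg_semiUC}) to the very end, which has no effect on the constant. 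Your Step~1 (subtracting $\bar\mu(f)$) is also what the paper does, just placed at the end rather than the beginning.

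Where you go slightly beyond the paper: the paper's proof explicitly sets $t=k\tau$ and never returns to a general $t=k\tau+m$, so the residue term $\cP_{k\tau}^{\delta}(\cP_m^{\delta}-\bar{\cP}_m)\tilde f$ you produce for $m\in(0,\tau)$ is simply not addressed there. Your observation that Assumption~\ref{ass_avg_semi}(\ref{ass_avg_semiLEB}) as stated only controls the error at the single time $\tau$, and hence must be read as holding uniformly on $[0,\tau]$ (as its analogues Assumption~\ref{ass_avg_meas}(\ref{ass_avg_meas_FTC}) and Assumption~\ref{ass}(\ref{ass3}) are) to close the argument for general $t$, is a genuine and legitimate point; the paper sidesteps it by only treating multiples of $\tau$. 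Given that the local error bound in the intended applications (e.g.\ from \cite{rockner2019strong}) is indeed a $\sup_{t\le\tau}$ estimate, reading it this way is the right fix, and your proof is then complete.
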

\begin{proof} The proof follows the strategy of the proof of \cite[Proposition 2.2]{angeli2023uniform}.
We first prove the result for $f\in \mathcal{C}_b^2(\mathbb{R}^n)$ such that $\bar{\mu}(f) = 0$, before generalising.
    We introduce the notation $\bar{\cP}_{t_1} \cP_{t_2}^{\delta} f(x,y)= \mathbb{E}[f(\bar{X}_{t_2}(X_{t_1}^{\delta}(x,y)))]$ where the process $\bar{X}_{t_2}(X_{t_1}^{\delta}(x,y))$ is a sequence of the process $X_t^{\delta}$ starting in $(x,y)$ up to time $t_1$ and the process $\bar{X}_t$ starting in $X_{t_1}^{\delta}(x,y)$ and running for time $t_2$.
    Fix $\delta>0$ and let $t=\tau k$ for some $k\in \mathbb{N}$. Then, for $0\le m\le k$
    \begin{align*}
        D_k(x,y)&:=\cP_{k\tau}^{\delta}f(x,y)-\bar{\cP}_{k\tau}f(x)
        \\ &=(\cP_{k\tau}^{\delta} f(x,y)-\bar{\cP}_{(k-m)\tau} \cP_{m\tau}^{\delta} f(x,y))+(\bar{\cP}_{(k-m)\tau} \cP_{m\tau}^{\delta} f(x,y)-\bar{\cP}_{k\tau}f(x))
        \\ & =: A_{m,k}+B_{m,k}\, .
    \end{align*}
    We observe that by \Cref{ass_avg_semi},
    \begin{align*}
        |B_{1,k}|&=|\bar{\cP}_{(k-1)\tau} \cP_{\tau}^{\delta} f(x,y)-\bar{\cP}_{k\tau}f(x)|
        \\ & =|\mathbb{E}_{(x,y)}[\bar{\cP}_{(k-1)\tau} f({X}_{\tau}^{\delta})-\bar{\cP}_{(k-1)\tau}f(\bar{X}_{\tau})]|
        \\ & \le C_1 \|\bar{\cP}_{(k-1)\tau} f\|_{\mathcal{C}_b^2}\phi(x,y)\delta^{\alpha}
        \\ & \le C_1 C_0 \| f\|_{\mathcal{C}_b^2}e^{-\lambda (k-1)\tau}\phi(x,y)\delta^{\alpha}
    \end{align*}
    and
    \begin{align*}
        |A_{1,k}|&=|\cP_{k\tau}^{\delta} f(x,y)-\bar{\cP}_{(k-1)\tau} \cP_{\tau}^{\delta} f(x,y)|
        \\ & = |\mathbb{E}_{(x,y)}[\cP^{\delta}_{(k-1)\tau}f(X_{\tau}^{\delta},Y_{\tau}^{\delta})-\bar{\cP}_{(k-1)\tau}f(X_{\tau}^{\delta})]|\le\mathbb{E}_{(x,y)}[|D_{k-1}(X_{\tau}^{\delta},Y_{\tau}^{\delta})|]\,.
    \end{align*}
    Next, we prove via induction that for $k\ge 2$ it holds
    \begin{align} \label{eq:ind_hypo}
        |D_k(x,y)|=C \|f\|_{\mathcal{C}_b^2}\sum_{i=0}^{k-1}\cP_{i\tau}^{\delta} \phi(x,y)\delta^{\alpha}e^{-\lambda \tau(k-1-i)} \,.
    \end{align}
    For $k=2$ we observe 
    \begin{align*}
        |A_{1,2}|&= |\mathbb{E}_{(x,y)}[\cP^{\delta}_{\tau}f(X_{\tau}^{\delta},Y_{\tau}^{\delta})-\bar{\cP}_{\tau}f(X_{\tau}^{\delta})]
        \\ & \le C_1 \|f\|_{\mathcal{C}_b^2}\mathbb{E}_{(x,y)}[\phi(\bar{X}_{\tau},Y_{\tau}^{\delta}) \delta^{\alpha}]= C_1 \|f\|_{\mathcal{C}_b^2}\cP_{\tau}^{\delta}\phi(x,y) \delta^{\alpha}\, .
    \end{align*}
    Hence, 
    \begin{align*}
        |D_2(x,y)|&\le |A_{1,2}|+|B_{1,2}| \le C_1\|f\|_{\mathcal{C}_b^2} (\cP_{\tau}^{\delta}\phi(x,y) +C_0 e^{-\lambda (k-1)\tau}\phi(x,y))\delta^{\alpha}
        \\ & \le C \|f\|_{\mathcal{C}_b^2}\sum_{i=0}^{1}\cP_{i\tau}^{\delta} \phi(x,y)\delta^{\alpha}e^{-\lambda \tau(1-i)}
    \end{align*}
    and \eqref{eq:ind_hypo} holds for $k=2$. Next, assume that the induction hypothesis \eqref{eq:ind_hypo} holds for $k-1$. Then,
    \begin{align*}
        |D_{k}(x,y)|& \le |A_{1,k}|+|B_{1,k}| \le \mathbb{E}_{(x,y)}[|D_{k-1}(X_{\tau}^{\delta},Y_{\tau}^{\delta})|]+ C_1 C_0 \| f\|_{\mathcal{C}_b^2}e^{-\lambda (k-1)\tau}\phi(x,y)\delta^{\alpha}
        \\ & \le \mathbb{E}_{(x,y)}\Big[C \|f\|_{\mathcal{C}_b^2}\sum_{i=0}^{k-2}\cP_{i\tau}^{\delta} \phi(X_{\tau}^{\delta},Y_{\tau}^{\delta})\delta^{\alpha}e^{-\lambda \tau(k-2-i)}\Big]+ C_1 C_0 \| f\|_{\mathcal{C}_b^2}e^{-\lambda (k-1)\tau}\phi(x,y)\delta^{\alpha}
        \\ & = C \|f\|_{\mathcal{C}_b^2}\sum_{i=1}^{k-1}\cP_{i\tau}^{\delta} \phi(x,y)\delta^{\alpha}e^{-\lambda \tau(k-1-i)}+ C_1 C_0 \| f\|_{\mathcal{C}_b^2}e^{-\lambda (k-1)\tau}\phi(x,y)\delta^{\alpha}
        \\ & =  C \|f\|_{\mathcal{C}_b^2}\sum_{i=0}^{k-1}\cP_{i\tau}^{\delta} \phi(x,y)\delta^{\alpha}e^{-\lambda \tau(k-1-i)}\, .
    \end{align*}
    Finally, we observe that by the uniform control imposed in \Cref{ass_avg_semi},
    \begin{align}\label{eqn:meanzerofinal}
        |D_{k}(x,y)|\le C \|f\|_{\mathcal{C}_b^2}\sum_{i=0}^{k-1}\Phi(x,y)\delta^{\alpha}e^{-\lambda \tau(k-1-i)}\le C \|f\|_{\mathcal{C}_b^2} \frac{\Phi(x,y)\delta^{\alpha}}{1-e^{-\lambda \tau}}\,,
    \end{align}
    which concludes the proof for mean-zero $f$.
    Now take $f \in \mathcal{C}_b^2(\mathbb{R}^n)$ such that $\bar{\mu}(f) \neq 0$. By \eqref{eqn:meanzerofinal} above we have
    \begin{align*}
        \begin{split}
             |\mathbb{E}_{(x,y)}[f(X_t^{\delta})]-\mathbb{E}_x[f(\bar{X}_t)]| &\le |\mathbb{E}_{(x,y)}[f(X_t^{\delta})-\bar{\mu}(f)]-\mathbb{E}_x[f(\bar{X}_t)-\bar{\mu}(f)]|  \\
             &\leq \frac{C(\|f- \bar{\mu(f)}\|_{\mathcal{C}_b^2}+ \bar{\mu}(f))\Phi(x,y)}{1-e^{-\lambda \tau}} \\
             &\leq \frac{C(\|f\|_{\mathcal{C}_b^2}+ \bar{\mu}(f))\Phi(x,y)}{1-e^{-\lambda \tau}}\,,
        \end{split}
    \end{align*}
    which concludes the proof.
\end{proof}

\begin{remark}
    The conditions of Assumption \ref{ass_avg_semi} can be found in the literature. Conditions on the averaged coefficients implying Assumption \ref{ass_avg_semi} (\ref{ass_avg_semiSES}) can be found in e.g. \cite{angeli2023uniform}. Alternatively, conditions on the coefficients of the coupled system implying such 
 averaged semigroup derivative estimates can be found in e.g. \cite{crisan2024}. Assumption \ref{ass_avg_semi} (\ref{ass_avg_semiLEB}) is the conclusion of \cite[Theorem 2.5]{rockner2019strong}, where $\phi(x) = C(1+|x|^m)$ for some $m>0$. Assumption \ref{ass_avg_semi} (\ref{ass_avg_semiUC}) is also implied by the assumptions of \cite[Theorem 2.5]{rockner2019strong}, as pointed out in that proof.
\end{remark}

\subsection{Application}

For a particular slow-fast system we verify Assumption~\ref{ass_avg_meas} and Assumption ~\ref{ass_avg_semi}.

\begin{example}
    Consider the following example, which can also be seen in \cite{crisan2024}, though we collect it here for the purposes of verifying our assumptions.
    	\begin{align}
	\rmd X_t^\delta &=  (-X_t^\delta -r \cos (y)) \rmd t + \sqrt{2} \, \rmd W_t \label{slowEx} \\ 
	\rmd Y_t^\delta &= \frac{1}{\delta}(-Y_t^\delta+r\sin (x)) \rmd t + \frac{ \sqrt{2}}{\sqrt{\delta}} \, \rmd B_t  \label{fastEx} \,.
	\end{align}
 In this case the averaged process is

 \begin{align} \label{ex:avg}
     \rmd \bar{X}_t = -\bar{X}_t -r\sqrt{\frac{1}{e}}\cos (\sin(\bar{X}_t))\rmd t + \sqrt{2} \rmd W_t\,.
 \end{align}
 Assumption \ref{ass_avg_semi} (\ref{ass_avg_semiSES}) holds when $|r|\leq 3.5$, since for this set of values the drift in \eqref{ex:avg} is monotonic. More specifically, it satisfies Assumption $2.6$ and Assumption $2.9$ in \cite{angeli2023uniform} and we can use \cite[Lemma 2.10]{angeli2023uniform} so Assumption \ref{ass_avg_semi} (\ref{ass_avg_semiSES}) holds.
 Assumption  \ref{ass_avg_semi} (\ref{ass_avg_semiLEB}) holds by, for example, \cite[Theorem 2.5]{rockner2019strong}.
 Assumption \ref{ass_avg_semi} (\ref{ass_avg_semiUC}) holds by \cite[Lemma 6.2]{crisan2024}.
Therefore we can apply \ref{thm:uit_avg_semi}.

To verify Assumption~\ref{ass_avg_meas}, let us assume for simplicity $r\le e^{1/2}$.
Consider two synchronously coupled copies of the averaged process \eqref{ex:avg}, i.e., $(\bar{X}_t^1,\bar{X}_t^2)_{t \ge 0}$ with initial distribution $\nu$ and $\eta$ on $\mathbb{R}$, respectively. Then by Ito's formula and Lipschitz continuity of sine and cosine it holds
\begin{align*}
    \rmd |\bar{X}_t^1-\bar{X}_t^2|^2 &= 2(\bar{X}_t^1-\bar{X}_t^2)\cdot (-\bar{X}_t^1- r e^{-1/2}\cos(\sin(\bar{X}_t^1))+\bar{X}_t^2+ r e^{-1/2}\cos(\sin(\bar{X}_t^2))) \rmd t
    \\ & \le - 2 |\bar{X}_t^1-\bar{X}_t^2|^2 +  |\bar{X}_t^1-\bar{X}_t^2| r e^{-1/2} |\cos(\sin(\bar{X}_t^1))- \cos(\sin(\bar{X}_t^2))| \rmd t
    \\ & \le - 2(1-re^{-1/2})|\bar{X}_t^1-\bar{X}_t^2|^2 \rmd t\, .
\end{align*}
Taking expectation and using Grönwall's inequality, we obtain
\begin{align*}
    \mathcal{W}_2(\nu \bar{p}_t, \eta \bar{p}_t)^2\le  \mathbb{E}[|\bar{X}_t^1-\bar{X}_t^2|^2]\le e^{ - 2(1-re^{-1/2})t} \mathbb{E}[|\bar{X}_0^1-\bar{X}_0^2|^2]\, .
\end{align*}
Taking the infimum over all couplings of $\eta$ and $\nu$ and the square root, we obtain contractivity in $L^2$ Wasserstein distance with rate $\lambda=1-re^{-1/2}$. Hence Assumption~\ref{ass_avg_meas}(\ref{ass_avg_meas_Con}) holds.
Assumption~\ref{ass_avg_meas}(\ref{ass_avg_meas_FTC}) holds by \cite[Theorem 2.1]{rockner2019strong}. More precisely, consider the slow-fast process \eqref{slowEx}-\eqref{fastEx} with initial distribution $\nu$ on $\mathbb{R}^2$ and the averaged system \eqref{ex:avg}  with initial distribution $\nu^x$ on $\mathbb{R}$. Then, for some $\tau>0$, there exists $\mathbf{M}(\nu)$ such that 
\begin{align*}
    \sup_{t\in[0,\tau]} \mathcal{W}_2((\nu p_t)^x,\nu^x \bar{p}_t)^2\le\sup_{t\in[0,\tau]} \mathbb{E}[|X_t^{\delta}-\bar{X}_t|^2]\le \mathbf{M}(\nu) \delta\, .
\end{align*}
To show Assumption~\ref{ass_avg_meas}(\ref{ass_avg_meas_UMB}), we consider the process $(X_t^{\delta}, Y_t^{\delta})$ with initial distribution $\nu$ on $\mathbb{R}^2$. Then by Ito's formula and the boundedness of sine and cosine, 
\begin{align*}
    \rmd |X_t^{\delta}|^2 & = 2X_t^{\delta}\cdot (-X_t^{\delta} - r\cos(Y_t^{\delta})\rmd t +  2X_t^{\delta}\cdot\sqrt{2}\rmd W_t + 1 \rmd t
    \\ & \le -2|X_t^{\delta}|^2 \rmd t  + 2r|X_t^{\delta}|\rmd t +  2X_t^{\delta}\cdot\sqrt{2}\rmd W_t +1 \rmd t
    \\ & \le -|X_t|^{\delta}|^2 + (r^2+1)\rmd t+  2X_t^{\delta}\cdot\sqrt{2}\rmd W_t
\end{align*}
and 
\begin{align*}
    \rmd |Y_t^{\delta}|^2 & = 2Y_t^{\delta}\cdot \delta^{-1}(-Y_t^{\delta} - r\sin(X_t^{\delta})\rmd t +  2Y_t^{\delta}\cdot\sqrt{2/\delta}\rmd B_t +  \delta^{-1} \rmd t
    \\ & \le -\delta^{-1}|Y_t|^{\delta}|^2 + (r^2+1)\delta^{-1}\rmd t+  2Y_t^{\delta}\cdot\sqrt{2\delta^{-1}}\rmd W_t\, . 
\end{align*}
Taking expectation and using Grönwall's inequality yields,
\begin{align*}
    \mathbb{E}[|X_t^{\delta}|^2]\le e^{-t} \mathbb{E}[|X_0^{\delta}|^2]+ r^2+1 \quad \text{and }\quad \mathbb{E}[|Y_t^{\delta}|^2]\le e^{-t\delta^{-1}} \mathbb{E}[|Y_0^{\delta}|^2]+ r^2+1\, ,
\end{align*}
which implies Assumption~\ref{ass_avg_meas}(\ref{ass_avg_meas_UMB}). Hence uniform in time convergence holds, i.e., there exists a constant $\tilde{\mathbf{C}}\in[0,\infty)$ such that for all $t\ge 0$
\begin{align*}
    \mathcal{W}_2(\nu^x \bar{p}_t, (\nu p_t^{\delta})^x)^2 \le \mathbb{E}[|X_t^{\delta}- \bar{X}_t|^2] \le \tilde{\mathbf{C}} \delta\, .
\end{align*}

\end{example}

\section{Numerical discretisations}\label{sec:numdiscr}
\subsection{Convergence result}
Let $(p_t)_{t\ge 0}$ denote the transition function of some time-continuous process. Here, we focus on processes $(X_t)_{t\ge 0}$ on $\mathbb{R}^d$ given by a stochastic differential equation 
\begin{align}
    \rmd X_t = b(X_t) \rmd t + \Sigma(X_t) \rmd B_t\, , \qquad \text{and } \qquad X_0=x\in\mathbb{R}^d\, ,
\end{align}
where $b:\mathbb{R}^d\to \mathbb{R}^d$, $\Sigma: \mathbb{R}^d\to \mathbb{R}^{d\times d}$ and $B_t$ is a $d$-dimensional standard Brownian motion. 
Since in general we can not implement the continuous dynamics exactly in time, we consider a numerical approximation of the dynamics. For given discretisation parameter $\delta>0$, we denote by $(\mathbf{X}_k^{\delta})_{k\in \mathbb{N}}$ the time-discrete Markov chain and by $\pi_\delta$ the corresponding transition kernel. % associated to the discretisation scheme. 

Let $\d(\nu, \eta)$ be a distance between two probability measures $\nu$ and $\eta$ on $\mathbb{R}^d$. 

\begin{assumption}\label{ass} We impose:
\begin{enumerate}
\item \label{ass1} (Contractivity of continuous dynamics) There exists $\lambda>0$ and $C<\infty$ such that for all probability measures $\nu, \mu$ on $\mathbb{R}^d$ and $t>0$ it holds
\begin{align*}
\d(\nu p_t, \eta p_t)\le C e^{-\lambda t} \d(\nu, \eta)\, .
\end{align*}

\item \label{ass3}  (Local error bound) There exist a constant $\alpha>0$ and a function $\mathbf{M}:\mathcal{P}(\mathbb{R}^d)\to [0,\infty)$ such that for all probability measures $\nu$ on $\mathbb{R}^d$ and for any $\delta>0$ it holds 
\begin{align*}
\sup_{l\in\mathbb{N}: l\delta\le 1} \d (\nu p_{\delta l}, \nu \pi_{\delta}^l)\le \delta^{\alpha} \mathbf{M}(\nu)\, .
\end{align*}

\item \label{ass2}  (Uniform control for approximated dynamics) For all probability measures $\nu$ on $\mathbb{R}^d$, there exists a constant $\mathbf{C}(\nu)< \infty$ such that 
\begin{align*}
\sup_{l\in \mathbb{N}} \mathbf{M}(\nu\pi_{\delta}^l) \le \mathbf{C}(\nu)\, .
\end{align*}

\end{enumerate}
\end{assumption}

A typical example for the value $\mathbf{M}(\nu)$ would be some moment bound for $\nu$, e.g., $\mathbb{E}_{x\sim \nu}[|x|^p]$ for $p\ge 1$. Then, the third condition is a uniform $p$-th moment bound for the approximated dynamics.

\begin{theorem} \label{thm:uit_num}
Suppose \Cref{ass} holds. Let $\nu$ be a probability measure on $\mathbb{R}^d$. Then, there exists $\tilde{\mathbf{C}}<\infty$ such that for all $l\in \mathbb{N}$ and $\delta>0$ with $\delta^{-1}\in \mathbb{N}$,
\begin{align*}
\d(\nu \pi_{\delta}^l, \nu p_{\delta l}) \le \tilde{\mathbf{C}}\delta^{\alpha}.
\end{align*}
Moreover, the constant $\tilde{\mathbf{C}}$ depends on $\lambda$, $C$ and $\mathbf{C}(\nu)$ given in \Cref{ass}.

\end{theorem}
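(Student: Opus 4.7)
The plan is to mirror the telescoping argument from the proof of Theorem~\ref{thm:avg_meas}, adapted to the discrete-time approximation setting. The key observation is that since $\delta^{-1} = N \in \mathbb{N}$, exactly $N$ discrete steps correspond to one unit of continuous time, and this unit interval is precisely the horizon on which Assumption~\ref{ass}(\ref{ass3}) controls the local error (namely $l\delta \le 1$). For a fixed number of steps $l \in \mathbb{N}$ I would write $l = kN + r$ with $0 \le r < N$, so that $\delta l = k + r\delta$ with $r\delta < 1$, and treat $k$ as the analogue of the variable $k$ in the proof of Theorem~\ref{thm:avg_meas}.

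I would then introduce the interpolating family
\begin{equation*}
b_j := \nu\, \pi_\delta^{jN}\, p_{k-j}\, p_{r\delta}, \qquad j = 0, 1, \ldots, k,
\end{equation*}
together with $b_0 = \nu p_{\delta l}$ and $b_{k+1} := \nu \pi_\delta^l$, so that the triangle inequality yields
\begin{equation*}
\d(\nu p_{\delta l}, \nu \pi_\delta^l) \le \sum_{j=0}^{k-1} \d(b_j, b_{j+1}) + \d(b_k, b_{k+1}).
\end{equation*}
For consecutive terms with $j<k$, both $b_j$ and $b_{j+1}$ end with the common factor $p_{k-j-1}\, p_{r\delta}$ applied to $\nu \pi_\delta^{jN} p_1$ and $\nu \pi_\delta^{jN} \pi_\delta^N$ respectively. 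Contractivity of the continuous semigroup (Assumption~\ref{ass}(\ref{ass1})) over the residual time $k-j-1+r\delta$ then reduces the bound to a factor $Ce^{-\lambda(k-j-1+r\delta)}$ multiplying $\d(\nu \pi_\delta^{jN} p_1, \nu \pi_\delta^{jN} \pi_\delta^N)$, and the latter is precisely a unit-time local error, controlled by Assumption~\ref{ass}(\ref{ass3}) with $l=N$ through $\delta^\alpha \mathbf{M}(\nu \pi_\delta^{jN})$. For the last term $\d(b_k, b_{k+1}) = \d(\nu \pi_\delta^{kN} p_{r\delta}, \nu \pi_\delta^{kN} \pi_\delta^r)$, I would apply Assumption~\ref{ass}(\ref{ass3}) again, now with $l = r < N$, giving $\delta^\alpha \mathbf{M}(\nu \pi_\delta^{kN})$.

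Finally I would invoke the uniform control of Assumption~\ref{ass}(\ref{ass2}) to replace every $\mathbf{M}(\nu \pi_\delta^{jN})$ by the uniform bound $\mathbf{C}(\nu)$, and sum the resulting geometric series $\sum_{j=0}^{k-1} e^{-\lambda(k-j-1)} \le (1-e^{-\lambda})^{-1}$, arriving at
\begin{equation*}
\d(\nu \pi_\delta^l, \nu p_{\delta l}) \le \delta^\alpha\, \mathbf{C}(\nu)\left(1 + \frac{C}{1-e^{-\lambda}}\right),
\end{equation*}
which is the desired uniform in time bound with $\tilde{\mathbf{C}}$ depending only on $\lambda$, $C$, and $\mathbf{C}(\nu)$.

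I do not expect a substantive obstacle, since the argument parallels Theorem~\ref{thm:avg_meas} closely and the hypothesis $\delta^{-1} \in \mathbb{N}$ removes any arithmetic mismatch between the discrete step size and the unit interval on which the local error is controlled. The only point that requires care is the orientation of the telescoping: as flagged in Remark~\ref{rem:relabelling}, contractivity must be applied to the continuous semigroup $p_t$ acting on iterates of the discrete chain $\nu \pi_\delta^{jN}$, since it is the latter on which Assumption~\ref{ass}(\ref{ass2}) provides uniform control. Swapping the roles of $p_t$ and $\pi_\delta$ in the interpolation would instead require contractivity of the discrete chain, which is not assumed.
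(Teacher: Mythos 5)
Your proof is correct and follows essentially the same telescoping argument as the paper's; writing $l = kN + r$ with $N = \delta^{-1}$ is the same decomposition the paper uses ($l\delta = k + m$ with $m = r\delta$), and your interpolating family $b_j$ matches the paper's chain of intermediate measures up to a reindexing $j \mapsto k-1-j$. One small correction to your closing remark: Remark~\ref{rem:relabelling} concerns the averaging setting, where the two processes live on different state spaces so the telescoping cannot be reoriented at all; in the discretisation setting the paper notes (in the remark following Theorem~\ref{thm:uit_num}) that the roles of $p_t$ and $\pi_\delta$ \emph{can} be interchanged, provided the hypotheses are swapped accordingly -- your point that the orientation is fixed once Assumption~\ref{ass} is stated as it is remains valid.
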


\begin{proof}
Fix $l\in \mathbb{N}$. We write $l\delta =k+m$ with $k\in \mathbb{N}$ and $m\in [0,1)$.
We assume that $\delta^{-1}\in \mathbb{N}$. Note that in this case $m\delta^{-1}=l-k\delta^{-1}\in \mathbb{N}$. 
Then 
\begin{align*}
\d&(\nu \pi_{\delta}^l, \nu p_{l \delta})= \d(\nu \pi_{\delta}^{k/\delta} \pi_{\delta}^{m/\delta} , \nu p_{k}p_m)
\\ & \le \d(\nu \pi_{\delta}^{k/\delta} \pi_{\delta}^{m/\delta},\nu \pi_{\delta}^{k/\delta} p_m) +\d(\nu \pi_{\delta}^{k/\delta} p_m, \nu p_{k}p_m)
\\ & \le \sum_{i=0}^{k-1} \d(\nu \pi_{\delta}^{(k-i)/\delta} p_{i}p_m , \nu \pi_{\delta}^{(k-i-1)/\delta} p_{i+1}p_m)+ \d(\nu \pi_{\delta}^{k/\delta} \pi_{\delta}^{m/\delta},\nu \pi_{\delta}^{k/\delta} p_m) 
\\ & = \sum_{i=0}^{k-1} \d((\nu \pi_{\delta}^{(k-i)/\delta}) p_{ i+m} , (\nu \pi_{\delta}^{(k-i-1)/\delta}p_{1}) p_{i+m})+\d(\nu \pi_{\delta}^{k/\delta} \pi_{\delta}^{m/\delta},\nu \pi_{\delta}^{k/\delta} p_m) 
\\ & \le  \sum_{i=0}^{k-1} C e^{-\lambda (i+m)} \d(\nu \pi_{\delta}^{(k-i)/\delta}  , \nu \pi_{\delta}^{(k-i-1)/\delta}p_{1} )+\d(\nu \pi_{\delta}^{k/\delta} \pi_{\delta}^{m/\delta},\nu \pi_{\delta}^{k/\delta} p_m) \,,
\end{align*}
where we used the triangle inequality in the first and second step and \Cref{ass}(\ref{ass1}) in the last step. 
Applying \Cref{ass}(\ref{ass3}) and \Cref{ass}(\ref{ass2}), it holds 
\begin{align*}
& \d(\nu \pi_{\delta}^{(k-i)/\delta}  , \nu \pi_{\delta}^{(k-i-1)/\delta}p_{1} )\le  \delta^{\alpha} \mathbf{M}(\nu \pi_{\delta}^{(k-i-1)/\delta})\le  \delta^{\alpha}\mathbf{C}(\nu), \qquad \text{and}
\\ & \d(\nu \pi_{\delta}^{k/\delta} \pi_{\delta}^{m/\delta},\nu \pi_{\delta}^{k/\delta} p_m) \le  \delta^{\alpha} \mathbf{M}(\nu \pi_{\delta}^{k/\delta})\le  \delta^{\alpha}\mathbf{C}(\nu)\, .
\end{align*}
Inserting this back into the previous equation yields
\begin{align*}
\d(\nu \pi_{\delta}^l, \nu p_{l \delta}) &\le C  \sum_{i=0}^{k-1} e^{-\lambda (i+m) } \delta^{\alpha}\mathbf{C}(\nu)+\delta^{\alpha} \mathbf{C}(\nu)\le  C \delta^{\alpha}\mathbf{C}(\nu) \frac{1-e^{-\lambda  k}}{1-e^{-\lambda }}+ \delta^{\alpha} \mathbf{C}(\nu)
\\ & \le  \delta^{\alpha}\mathbf{C}(\nu) \Big(\frac{C}{1-e^{-\lambda }}+1\Big)\, ,
\end{align*}
which concludes the proof.
\end{proof}

Given two processes $(X_t)_{t \ge 0}$ and $(X_t')_{t\ge 0}$ with initial values $X_0=x$ and $X_0=x'$ we write $\mathbb{E}_{(x,x')}[|X_t-X_t'|^2]=\mathbb{E}[|X_t-X_t|^2| X_0=x, X_0'=x']$. Further, $\mathbb{E}_{x}[|X_{l\delta}-\mathbf{X}_{l}^\delta|^2 | \mathbf{X}_0^{\delta}=x, X_0=x]$.
If we modify Assumption~\ref{ass} slightly, we obtain a uniform in time strong error bound.
\begin{corollary}[Uniform in time strong error] \label{cor:disc_strongerror} 
    Assume that there exists $\lambda>0$ and $C<\infty$ such that for all initial conditions $x,x'\in\mathbb{R}^d$ and $t\ge 0$ it holds
    $\mathbb{E}_{(x,x')}[|X_t-X_t'|^2]^{1/2}\le Ce^{-\lambda t} |x-x'|$.
    Assume that there exists a constant $\alpha>0$ and a function $\mathbf{M}:\mathbb{R}^d\to[0,\infty)$ such that for all $x\in\mathbb{R}^d$ and  $l\in \mathbb{N}$ with $l \delta \le 1$ the local strong error satisfies $\mathbb{E}_x[|X_{l\delta}-\mathbf{X}_l^{\delta}|^2]^{1/2}\le\delta^{\alpha} \mathbf{M}(x)$. Further suppose that for all $x\in\mathbb{R}^d$ there exists a constant $\mathbf{C}(x)$ such that for the process $\mathbf{X}_l^{\delta}$ starting in $x$ it holds $\sup_{l\in\mathbb{N}}\mathbb{E}_x[\mathbf{M}(\mathbf{X}_l^{\delta})]\le \mathbf{C}(x)$. %Assumption~\ref{ass}\eqref{ass3} holds for the quantity $M(\nu)$.
    Then, for all $x\in \mathbb{R}^d$ there exists a constant $\tilde{\mathbf{C}}<\infty$ such that
    \begin{align*}
        \sup_{k\in \mathbb{N}} \mathbb{E}_{x}[|X_{k\delta}-\mathbf{X}_k^{\delta}|^2]^{1/2}\le \tilde{\mathbf{C}}\delta^{\alpha}\, .
    \end{align*}
\end{corollary}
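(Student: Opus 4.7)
The proof is the strong-error analogue of Theorem~\ref{thm:uit_num}, so the plan is to mirror that argument with the distance $\d(\cdot,\cdot)$ replaced by the $L^2$-norm of the pathwise difference, using a synchronous coupling to lift the contractivity hypothesis (which is stated for two solutions starting from deterministic initial points) to the case of random intermediate starting states. Fix $n\in\mathbb N$ and write $n\delta=K+m$ with $K\in\mathbb N_0$ and $m\in[0,1)$; as in the original proof we may assume $\delta^{-1}\in\mathbb N$, so that both $K/\delta$ and $m/\delta$ are integers, and deal with the general case by an obvious modification.

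The core step is to construct, on a single probability space, a family of intermediate random variables $Y^{-1},Y^0,\dots,Y^K$ interpolating between $Y^{-1}=\mathbf X_n^\delta$ (fully discretised up to time $n\delta$) and $Y^K=X_{n\delta}$ (fully continuous up to time $n\delta$), as follows: for $0\le i\le K$, let $Y^i$ be obtained by running the Markov chain $\mathbf X^\delta$ for $(K-i)/\delta$ steps from $x$, and then evolving the continuous SDE for time $i+m$ driven by one common Brownian motion, used synchronously across different values of $i$. Then by the triangle inequality
\begin{equation*}
    \mathbb E_x\bigl[|X_{n\delta}-\mathbf X_n^\delta|^2\bigr]^{1/2}
    \le \|Y^{-1}-Y^0\|_{L^2}+\sum_{i=0}^{K-1}\|Y^i-Y^{i+1}\|_{L^2}.
\end{equation*}
For the boundary term, $Y^{-1}$ and $Y^0$ agree up to time $K$ and then differ by $m/\delta$ further discrete steps versus a continuous SDE increment of length $m<1$; by the local strong error assumption (conditioned on $\mathbf X^\delta_{K/\delta}$) this is bounded by $\delta^\alpha\,\mathbb E_x[\mathbf M(\mathbf X^\delta_{K/\delta})^2]^{1/2}$. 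For $0\le i\le K-1$, the paths $Y^i$ and $Y^{i+1}$ agree up to time $K-i-1$, then differ at time $K-i$ by a local strong error of size $\delta^\alpha\,\mathbf M(\mathbf X^\delta_{(K-i-1)/\delta})$ conditional on the common history, and finally evolve together continuously for time $i+m$ driven by the same Brownian motion. Applying the contractive strong-error estimate conditionally and then taking total expectation yields
\begin{equation*}
    \|Y^i-Y^{i+1}\|_{L^2}\le C e^{-\lambda(i+m)}\delta^\alpha\, \mathbb E_x\bigl[\mathbf M(\mathbf X^\delta_{(K-i-1)/\delta})^2\bigr]^{1/2}.
\end{equation*}

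Finally, the uniform-moment hypothesis controls the factors $\mathbb E_x[\mathbf M(\mathbf X^\delta_{\cdot})^2]^{1/2}$ by $\mathbf C(x)$ (implicitly reinterpreting $\mathbf M$ or $\mathbf C$ with the appropriate second moment if needed), and summing the resulting geometric series gives
\begin{equation*}
    \mathbb E_x\bigl[|X_{n\delta}-\mathbf X_n^\delta|^2\bigr]^{1/2}
    \le \delta^\alpha \mathbf C(x)\Bigl(1+\frac{C}{1-e^{-\lambda}}\Bigr),
\end{equation*}
which is independent of $n$ and yields the claimed uniform bound $\tilde{\mathbf C}$. The main subtlety, and essentially the only nontrivial point beyond the pattern already established in Theorem~\ref{thm:uit_num}, is the use of a synchronous coupling and conditioning so that the per-trajectory contractivity estimate $\mathbb E_{(x,x')}[|X_t-X_t'|^2]^{1/2}\le Ce^{-\lambda t}|x-x'|$ — stated for deterministic endpoints — can be applied with $x,x'$ replaced by random variables produced by the telescoping construction, which is standard but must be articulated carefully.
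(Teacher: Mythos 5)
Your argument is correct and follows the same telescoping-plus-conditioning strategy the paper intends (the paper's own proof is literally ``analogous to the proof of Theorem~\ref{thm:uit_num}''), so the approach matches. One point that you wave at parenthetically deserves to be made explicit: applying the local error bound conditionally on the chain position $\mathbf{X}^\delta_{(K-i-1)/\delta}$, squaring, and taking total expectation inevitably produces $\mathbb{E}_x\bigl[\mathbf{M}(\mathbf{X}^\delta_{(K-i-1)/\delta})^2\bigr]^{1/2}$ rather than $\mathbb{E}_x\bigl[\mathbf{M}(\mathbf{X}^\delta_{(K-i-1)/\delta})\bigr]$, and since Jensen only gives $\mathbb{E}[\mathbf{M}]\le \mathbb{E}[\mathbf{M}^2]^{1/2}$ and not the reverse, the stated hypothesis $\sup_{l}\mathbb{E}_x[\mathbf{M}(\mathbf{X}_l^\delta)]\le\mathbf{C}(x)$ does not literally close the argument. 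The uniform control should instead be $\sup_{l}\mathbb{E}_x[\mathbf{M}(\mathbf{X}_l^\delta)^2]^{1/2}\le\mathbf{C}(x)$ (equivalently, replace $\mathbf{M}$ by $\mathbf{M}^2$ throughout the corollary). This is an imprecision in the statement of the corollary, not in your proof, and you were right to flag it; the analogous Corollary~\ref{cor:avg_strongerror} has the same issue.
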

\begin{proof}
    The proof works analogous to the proof of \Cref{thm:uit_num} using the modified conditions.
\end{proof}

\begin{remark}
    We note that, as in Theorem \ref{thm:intro} in the introduction, the role of the exact and the approximated dynamics can be interchanged in the assumptions, i.e., we can assume contractivity for the approximated dynamics and a uniform bound for the continuous dynamics. In this case the role of $\pi_{\delta}$ and $p_{\delta}$ are switched in the proof and uniform convergence still holds. This is in contrast to the setting of the method of averaging for SDEs, discussed in Remark \ref{rem:relabelling}.
\end{remark}

\begin{remark}
    As for the multiscale methods, uniform in time convergence can be shown for the associated semigroup. For the precise assumptions and result we refer to \cite{angeli2023uniform}.
\end{remark}

\subsection{Applications}

%\paragraph{Overdamped Langevin dynamics}
    Consider the \textit{overdamped Langevin dynamics} $(X_t)_{t\ge 0}$ given by 
    \begin{align}
        \rmd X_t = -\nabla U(X_t)\rmd t+ \sqrt{2}\rmd B_t\, , \qquad X_0=x\, ,
    \end{align}
    where $x\in \mathbb{R}^d$, $U:\mathbb{R}^d\to \mathbb{R}$ is a differentiable potential and $(B_t)_{t\ge 0}$ is a $d$-dimensional standard Brownian motion. Under appropriate assumptions \cite[Proposition 4.2]{pavliotis2014} on the potential $U$ a solution exists and the associated transition function $p_t$ of the dynamics has a unique invariant probability measure $\mu$ on $\mathbb{R}^d$ given by the Gibbs-distribution $\mu(\rmd x)\propto \exp(-U(x))\rmd x$. 
    The explicit Euler discretisation forms a well-known and easy to implement discretisation of the dynamics. Given the discretisation parameter $\delta>0$, the associated Markov chain $(\mathbf{X}_k^{\delta})_{k\in \mathbb{N}}$ is given by 
    \begin{align*}
        \mathbf{X}_{k+1}^{\delta}=\mathbf{X}_k^{\delta}-\delta\nabla U(\mathbf{X}_k^{\delta})+\sqrt{2\delta} \Xi_k\, ,
    \end{align*}
    where $(\Xi_k)_{k\in \mathbb{N}}$ are i.i.d random variables with $\Xi_k\sim \mathcal{N}(0,I_d)$, see e.g. \cite[Chapter 9]{kloeden1992}.
    \begin{corollary}
        Suppose $U$ is strongly convex and has a Lipschitz continuous gradient. Fix $\delta>0$. Let $\nu\in \mathcal{P}^2(\mathbb{R}^d)$. 
        Denote by $p_t$ the transition function corresponding to the overdamped Langevin dynamics and by $\pi_{\delta}$ the transition kernel corresponding to the explicit Euler discretisation with parameter $\delta>0$.
        Then, there exists a constant $\tilde{\mathbf{C}}>0$ depending on the dimension $d$, the second moment of $\nu$ and the  strongly convexity constant and the Lipschitz constant of $U$ such that for all $l\in \mathbb{N}$,
        \begin{align*}
            \mathcal{W}_2(\nu \pi_{\delta}^l, \nu p_{\delta l})\le \mathbb{E}[|X_{\delta l}-\mathbf{X}_l^{\delta}|^2|\{X_0=\mathbf{X}^\delta_0\sim \nu\}]^{1/2}  \le \tilde{\mathbf{C}}\delta\, .
        \end{align*}
    \end{corollary}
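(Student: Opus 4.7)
The plan is to verify the three hypotheses of \Cref{cor:disc_strongerror} for the overdamped Langevin dynamics and its Euler discretisation, which will then immediately yield the stated uniform-in-time strong error bound with rate $\alpha=1$ (since additive-noise SDEs with Lipschitz drift admit strong order one Euler approximations). Throughout, let $m>0$ denote the strong convexity constant of $U$ and $L<\infty$ the Lipschitz constant of $\nabla U$.

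For the contractivity of the continuous dynamics, I would use a synchronous coupling. Given two copies $(X_t,X_t')$ driven by the same Brownian motion and started at $x,x'$, Itô's formula yields
\begin{equation*}
    \frac{\rmd}{\rmd t}|X_t-X_t'|^2 = -2(X_t-X_t')\cdot(\nabla U(X_t)-\nabla U(X_t')) \le -2m\,|X_t-X_t'|^2,
\end{equation*}
by strong convexity. Grönwall then gives $\mathbb{E}_{(x,x')}[|X_t-X_t'|^2]^{1/2}\le e^{-mt}|x-x'|$, verifying the first hypothesis with $C=1$ and $\lambda=m$.

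For the local strong error I would apply the standard Euler--Maruyama analysis specialised to additive noise: since $X_t - \mathbf{X}^{\delta}_k = X_{k\delta} - \mathbf{X}^\delta_k - \int_{k\delta}^t(\nabla U(X_s) - \nabla U(\mathbf{X}^\delta_k))\rmd s$ on $[k\delta,(k+1)\delta]$, I would split the integrand into $\nabla U(X_s) - \nabla U(X_{k\delta})$ and $\nabla U(X_{k\delta}) - \nabla U(\mathbf{X}^\delta_k)$ and exploit Lipschitz continuity of $\nabla U$ together with the standard one-step estimate $\mathbb{E}|X_s - X_{k\delta}|^2 \lesssim (s-k\delta)(1+|X_{k\delta}|^2)$ for $s-k\delta\le\delta$. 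Iterating this over at most $1/\delta$ steps and applying discrete Grönwall produces $\mathbb{E}_x[|X_{l\delta}-\mathbf{X}_l^\delta|^2]^{1/2}\le \delta\, \mathbf{M}(x)$ for $l\delta\le 1$, where $\mathbf{M}(x)=C_L(1+|x|^2)^{1/2}$ for some constant $C_L$ depending on $L$ and $d$; this is a classical computation (see e.g.\ \cite{milstein2004, kloeden1992}) so I would not redo it in detail.

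The main obstacle, and the only place where strong convexity is really used beyond contraction, is verifying the uniform second-moment control on the Euler chain, since $\mathbf{M}$ grows like a second moment. Here I would establish a one-step drift inequality of the form $\mathbb{E}[|\mathbf{X}_{k+1}^\delta|^2 \mid \mathbf{X}_k^\delta] \le (1 - 2m\delta + L^2\delta^2)|\mathbf{X}_k^\delta|^2 + 2\delta\,|\mathbf{X}_k^\delta||\nabla U(0)| + 2d\delta$, obtained by expanding the square and using $x\cdot\nabla U(x)\ge m|x|^2 - |x||\nabla U(0)|$ from strong convexity, together with $|\nabla U(x)|\le L|x|+|\nabla U(0)|$. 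Provided $\delta$ satisfies $\delta \le m/L^2$ (which can be absorbed into the constant, since uniform convergence is trivial for large $\delta$), the contraction factor is strictly less than one, and iterating yields $\sup_{l\in\mathbb{N}}\mathbb{E}_x[|\mathbf{X}_l^\delta|^2]\le C(1+|x|^2)$. Combined with Jensen's inequality this furnishes $\sup_l\mathbb{E}_x[\mathbf{M}(\mathbf{X}_l^\delta)]\le \mathbf{C}(x)$. All three hypotheses of \Cref{cor:disc_strongerror} then hold with $\alpha=1$, and the conclusion follows; the $\mathcal{W}_2$ bound is obtained by taking the initial coupling $X_0=\mathbf{X}_0^\delta\sim\nu$ and integrating the pointwise strong error against $\nu$ using Jensen.
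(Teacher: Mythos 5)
Your proof is correct and follows essentially the same route as the paper: you verify the three hypotheses of \Cref{cor:disc_strongerror} --- contractivity via synchronous coupling and strong convexity (the paper cites \cite[Eq.~(6)--(7)]{bolley2012}), an order-one local strong error for Euler--Maruyama with additive noise (the paper cites \cite[Theorem 9.6.2]{kloeden1992}), and a uniform second-moment bound via a one-step Lyapunov drift inequality from strong convexity and gradient Lipschitzness --- which is exactly the paper's (briefer, citation-based) argument spelled out. The only point to be slightly careful about is your parenthetical that ``uniform convergence is trivial for large $\delta$'': both your argument and the paper's implicitly work in the stability regime $\delta \lesssim m/L^2$, since for larger stepsizes the Euler chain's second moment need not stay bounded and the conclusion can fail rather than becoming trivial.
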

    \begin{proof}
        The result is a direct consequence of \Cref{thm:uit_num} and \Cref{cor:disc_strongerror}. By strongly convexity contraction hold see e.g. \cite[Equation (6)-(7)]{bolley2012}. By \cite[Theorem 9.6.2]{kloeden1992} a local error bound of order $\alpha=1$ holds. Finally, the uniform second moment bound follows by using the strong convexity and Lipschitz continuity of the gradient.
    \end{proof}
    \begin{remark}[Uniform in time convergence for other distance functions]
        We note that relaxing the conditions on $U$ and assuming that for instance $U$ is only strongly convex outside a Euclidean ball, we can still get uniform in time convergence in $L^1$ Wasserstein distance by using the contraction result by Eberle \cite{eberle2016}. 
    \end{remark}
    \begin{remark}[Higher order discretisation schemes]
        The results also carries over to higher order discretisation schemes. See for instance \cite[Theorem 11.5.1]{kloeden1992} for local error bounds with $\alpha>1$ in $L^2$ Wasserstein distance.
    \end{remark}

\begin{example}[Langevin dynamics]
Consider the process $(X_t,V_t)_{t\ge 0}$ given by
\begin{align}
    \begin{cases}
        \rmd X_t = V_t \rmd t \\
        \rmd V_t= -\nabla U(X_t) \rmd t -\gamma V_t + \sqrt{2 \gamma} \rmd B_t \\
    \end{cases}
\end{align}
with initial condition $(X_0,V_0)=(x,v)\in \mathbb{R}^{2d}$. As in the overdamped case $U:\mathbb{R}^d\to \mathbb{R}$ is a differentiable potential and $(B_t)_{t\ge 0}$ is a $d$-dimensional standard Brownian motion. 
Here, under appropriate assumption on $U$ a solution exists and the associated transition function $p_t$ has a unique invariant probability measure $\mu$ given by the Boltzmann-Gibbs distribution $\mu(\rmd x\rmd v )\propto \exp(-U(x)-|v|^2/2) \rmd x \rmd v$. 
Analogously to the overdamped case an explicit Euler scheme can be considered which has accuracy of order $\mathcal{O}(\delta)$ and gives uniform in time error bounds of order $\mathcal{O}(\delta)$. Alternatively more sophisticated splitting schemes can be considered which lead to global higher order accuracy, e.g. the so-called OBABO scheme \cite{monmarche2021} or the UBU scheme \cite{zapatero2019, sanzserna2021}.

Focusing on the UBU scheme, assume that  $U$ is strongly convex, has a Lipschitz continuous gradient and is sufficiently regular,  cf. \cite{sanzserna2021,paulin2024} . Then, Assumption~\ref{ass} can be verified and we can show that there exits a constant $\tilde{\mathbf{C}}>0$ depending on the dimension $d$, the second moment of $\nu$ and the strongly convexity constant and the Lipschitz constant of $U$ such that for all $l\in\mathbb{N}$,
\begin{align*}
        \mathcal{W}_2(\nu \pi_{\delta}^l, \nu p_{\delta l})\le \tilde{\mathbf{C}}\delta^2\, .
\end{align*}
Alternatively, relaxing the strong convexity condition on $U$ and assuming the assumptions on $U$ of \cite[Theorem 10]{schuhwhalley2024} it holds uniform in time error bounds for the UBU scheme in $L^1$ Wasserstein distance of the form 
    \begin{align*}
        \mathcal{W}_1(p_{l\delta}\nu,\pi_{\delta}^l \nu)\le \tilde{\mathbf{C}} \delta^2\, .
    \end{align*}

\end{example}

\begin{example}[Hamiltonian Monte Carlo (HMC)]
    The Theorem~\ref{thm:uit_num} can also be carried over to time-discrete Markov chains $(\mathbf{X}_k)_{k\in\mathbb{N}}$, where the transition kernel $\pi$ is not exactly implementable but an approximation has to be considered, as for instance for the Hamiltonian Monte Carlo algorithm \cite{neal2011}. 
    A typical approximation scheme of the exact Hamiltonian dynamics is given by the velocity Verlet integration or Leapfrog method. Denote by $(\mathbf{X}_k^{\delta})_{k\in\mathbb{N}}$ the Markov chain of the unadjusted HMC and by $\pi^{\delta}$ the corresponding transition kernel.
    If $U$ is convex and gradient Lipschitz, contractivity can be shown for the exact dynamics in $L^2$ Wasserstein distance, see \cite[Lemma 6]{chen2022}, and a local error bound between the approximation and the exact chain in $L^2$ Wasserstein distance (similarly to the proof of \cite[Theorem 8]{bourabee2023a}). Further, under the above conditions it is straightforward to prove a uniform second moment bound.

    Then analogously to \Cref{thm:uit_num} and \Cref{cor:disc_strongerror}, it holds for any initial measure $\nu$ with finite second moment and $l\in \mathbb{N}$
    \begin{align*}
        \mathcal{W}_2(\nu \pi^l, \nu \pi_{\delta}^l)\le \mathbb{E}[|\mathbf{X}_l-\mathbf{X}_l^{\delta}|^2|\{\mathbf{X}_0=\mathbf{X}^{\delta}_0\sim \nu\}]^{1/2} \le \tilde{\mathbf{C}} \delta\, ,
    \end{align*}
    where $\tilde{\mathbf{C}}\in[0,\infty)$ depends on the second moment of $\nu$ and properties of $U$.
\end{example}

\section{Mean-field particle systems and its limiting process}\label{sec:meanfield}
\subsection{Convergence result}

Given $N\in \mathbb{N}$, we consider a mean-field particle system with $N$ particles which we denote by $\mathcal{X}^N=(X_t^{i,N})_{t\ge 0, i=1,\ldots, N}$. Denote by $p_t$ the corresponding transition function for the $N$-particle system at time $t\ge 0$.
Let each component of the particle system converge to the limiting process $(\bar{X}_t)_{t\ge 0}$ in an appropriate sense as $N\to \infty$.
A typical example forms the \textit{McKean-Vlasov process} given by a system of SDEs
\begin{align*}
    \rmd X^{i,N}_t = b(X^{i,N}_t, \mu_{\mathcal{X}^N_t}) \rmd t + \sigma(X^{i,N}_t , \mu_{\mathcal{X}^N_t})\rmd B^i_t\, ,
\end{align*}
where $b:\mathbb{R}^d\times \mathcal{P}(\mathbb{R}^d)\to \mathbb{R}^d$, $\sigma:\mathbb{R}^d\times \mathcal{P}(\mathbb{R}^d)\to \mathbb{R}^{d\times d}$, $\mu_{\mathcal{X}^N_t}=\frac{1}{N}\sum_{i=1}^N \delta_{X_t^{i,N}}$ and $(B_t^{i})_{t\ge 0}$ are $N$ i.i.d. standard Brownian motions.
Its corresponding limit process is given by the \textit{nonlinear McKean-Vlasov process}
\begin{align*}
    \rmd \bar{X}_t = b(\bar{X}_t, \bar{\mu}_t)\rmd t + \sigma (\bar{X}_t, \bar{\mu}_t )\rmd B_t\, ,
\end{align*}
where $\bar{\mu}_t=\Law(\bar{X}_t)$.

In the following we denote by $\bar{X}^N=(\bar{X}_t^{i,N})_{t \ge 0, i=1, \ldots, N}$ $N$ i.i.d. copies of the limit process and by $\bar{p}_t$ the transition function of one component of the limit process. 

Similarly to the two previous frameworks, we assume contractivity for the mean-field particle process, a local-in-time bound between the particle system and the limit process and a uniform in time bound for the limit process: %The local-in-time bound corresponds in this framework to finite time propagation of chaos:

\begin{assumption}\label{ass_meanfield} We impose:
\begin{enumerate}
\item \label{ass1_m} (Contractivity for the mean-field particle system) There exists $\lambda>0$ and $C<\infty$ such that for all probability measures $\nu, \mu$ on $\mathbb{R}^d$ and $t>0$ it holds
\begin{align*}
\d(\nu^{\otimes N} p_t^N, \eta^{\otimes N} p_t^N)\le C e^{-\lambda t} \d(\nu^{\otimes N}, \eta^{\otimes N})\, .
\end{align*}

\item \label{ass2_m}  (Finite time propagation of chaos) There exists a time $\tau>0$, a constant $\alpha>0$ and a function $\mathbf{M}:\mathcal{P}(\mathbb{R}^{d})\to [0,\infty)$ such that for all probability measures $\nu$ on $\mathbb{R}^d$ and all $N\in\mathbb{N}$ it holds
\begin{align*}
\sup_{t\le \tau} \d (\nu^{\otimes N} p_{t}^N, (\nu \bar{p}_{t})^{\otimes N})\le N^{-\alpha} \mathbf{M}(\nu).
\end{align*}

\item \label{ass3_m}  (Uniform control for the limit process) For all probability measures $\nu$ on $\mathbb{R}^d$, there exists a constant $\mathbf{C}(\nu)< \infty$ such that 
\begin{align*}
\sup_{t\ge 0} \mathbf{M}(\nu\bar{p}_t) \le \mathbf{C}(\nu)\, .
\end{align*}

\end{enumerate}
\end{assumption}

\begin{theorem}\label{thm:mf_meas} Suppose \Cref{ass_meanfield} holds. Let $\nu$ be a probability measure on $\mathbb{R}^d$. Then, there exists a constant $\tilde{\mathbf{C}}>0$ such that for all $t\ge 0$ and $N\in \mathbb{N}$
\begin{align*}
    \d ((\nu \bar{p}_t)^{\otimes N}, \nu^{\otimes N} p_t^N) \le \tilde{\mathbf{C}} N^{-\alpha}\, .
\end{align*}
Moreover, the constant $\tilde{\mathbf{C}}$ depends on $\lambda$, $C$ and $\mathbf{C}(\nu)$ given in \Cref{ass_meanfield} and is independent of $N\in\mathbb{N}$.
\end{theorem}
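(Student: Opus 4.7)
The plan is to repeat the telescoping scheme already used in the proof of Theorem~\ref{thm:avg_meas} (and sketched more abstractly after Theorem~\ref{thm:intro}), but with the roles of the two processes reversed: the $N$-particle system is now the contractive object, while $N$ independent copies of the limit process play the role of the approximation for which only a finite-time estimate is available. In particular, I would not attempt to exploit any contractivity of the tensor semigroup $\bar p_t^{\otimes N}$.

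Concretely, I would write $t = k\tau+m$ with $k\in\mathbb{N}$ and $m\in[0,\tau)$ and introduce the interpolating family on $\mathbb{R}^{dN}$
\begin{align*}
    \mu_i := (\nu \bar p_{i\tau})^{\otimes N}\, p_{t-i\tau}^N, \qquad i=0,\ldots,k, \qquad \mu_{k+1}:=(\nu \bar p_t)^{\otimes N}.
\end{align*}
Since $\mu_0 = \nu^{\otimes N} p_t^N$, the triangle inequality gives
\begin{align*}
    \d\bigl((\nu \bar p_t)^{\otimes N},\nu^{\otimes N} p_t^N\bigr) \le \d(\mu_k,\mu_{k+1}) + \sum_{i=0}^{k-1}\d(\mu_i,\mu_{i+1}).
\end{align*}
The leftover term $\d(\mu_k,\mu_{k+1}) = \d\bigl((\nu \bar p_{k\tau})^{\otimes N}p_m^N,((\nu \bar p_{k\tau})\bar p_m)^{\otimes N}\bigr)$ is immediately controlled by applying the finite-time propagation-of-chaos estimate \Cref{ass_meanfield}(\ref{ass2_m}) at time $m\le\tau$ to the product initial measure $(\nu\bar p_{k\tau})^{\otimes N}$, and combined with the uniform control \Cref{ass_meanfield}(\ref{ass3_m}) it is at most $N^{-\alpha}\mathbf{C}(\nu)$.

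For each $i$ in the sum I would factor
\begin{align*}
    \mu_i = (\nu\bar p_{i\tau})^{\otimes N}p_\tau^N\, p_{t-(i+1)\tau}^N, \qquad \mu_{i+1}= ((\nu\bar p_{i\tau})\bar p_\tau)^{\otimes N}\, p_{t-(i+1)\tau}^N,
\end{align*}
so that a common particle tail of length $(k-i-1)\tau+m$ appears on both sides. Peeling it off via the contractivity of $p^N$ (\Cref{ass_meanfield}(\ref{ass1_m})) costs a factor $Ce^{-\lambda((k-i-1)\tau+m)}$ and reduces the problem to the one-step local distance
\begin{align*}
    \d\bigl((\nu\bar p_{i\tau})^{\otimes N}p_\tau^N,\,((\nu\bar p_{i\tau})\bar p_\tau)^{\otimes N}\bigr) \le N^{-\alpha}\mathbf{M}(\nu\bar p_{i\tau})\le N^{-\alpha}\mathbf{C}(\nu),
\end{align*}
again by \Cref{ass_meanfield}(\ref{ass2_m})--(\ref{ass3_m}) at time $\tau$ starting from the product $(\nu\bar p_{i\tau})^{\otimes N}$. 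Summing the resulting geometric series in $i$ is $k$-independent, so together with the leftover term one gets the statement with $\tilde{\mathbf{C}}=\mathbf{C}(\nu)\bigl(1+C/(1-e^{-\lambda\tau})\bigr)$.

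The only real subtlety is that when the common tail is stripped off, one of the two starting measures compared, namely $(\nu\bar p_{i\tau})^{\otimes N}p_\tau^N$, is no longer a product because the particle system creates correlations, whereas \Cref{ass_meanfield}(\ref{ass1_m}) is literally stated for pairs of product distributions. This is not a genuine gap: in all applications we have in mind (see e.g.\ \cite{malrieu2001,durmus2020}) the rate $\lambda$ is produced by a pathwise coupling on $\mathbb{R}^{dN}$ that is valid from arbitrary initial configurations, so the contraction inequality automatically extends to arbitrary pairs of probability measures on $\mathbb{R}^{dN}$, and I would simply invoke \Cref{ass_meanfield}(\ref{ass1_m}) in this slightly broader sense.
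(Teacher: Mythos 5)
Your proposal follows essentially the same route as the paper: the same interpolating family $\mu_i=(\nu\bar p_{i\tau})^{\otimes N}p_{t-i\tau}^N$, the same telescoping decomposition, the same peeling-off of the common particle tail via contractivity, and the same use of finite-time propagation of chaos plus uniform control at each step, with the geometric series then giving a $k$-independent bound. The subtlety you flag about \Cref{ass_meanfield}(\ref{ass1_m}) being stated for pairs of product laws while one compared measure is $(\nu\bar p_{i\tau})^{\otimes N}p_\tau^N$ (not a product) is a genuine observation, but it equally affects the paper's own proof, which silently invokes the contraction in this slightly broader sense; your suggested resolution (couplings valid from arbitrary initial configurations on $\mathbb{R}^{dN}$) is the right one.
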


\begin{proof}
    Let $t=k\tau+m$ for some $k\in \mathbb{N}$ and $m\in[0,\tau)$. 
    Then,
    \begin{align*}
        &\d ((\nu \bar{p}_t)^{\otimes N}, \nu^{\otimes N} p_t^N)
        \\ & \le \sum_{i=0}^{k-1} \d ((\nu \bar{p}_{\tau (k-i)})^{\otimes N}p_{\tau i+m}^N, (\nu \bar{p}_{\tau (k-i-1)})^{\otimes N} p_{\tau(i+1)+m}^N)+ \d ((\nu \bar{p}_{\tau k})^{\otimes N} p_{m}^N,(\nu \bar{p}_{\tau k+m})^{\otimes N})
        \\ & =\sum_{i=0}^{k-1} \d ((\nu \bar{p}_{\tau (k-i)})^{\otimes N}p_{\tau i+m}^N, (\nu \bar{p}_{\tau (k-i-1)})^{\otimes N} p_{\tau}^N  p_{\tau i+m}^N)+\d ((\nu \bar{p}_{\tau k})^{\otimes N} p_{m}^N,(\nu \bar{p}_{\tau k+m})^{\otimes N})
        \\ & \le \sum_{i=0}^{k-1} C e^{-\lambda (i \tau+m)} \d ((\nu \bar{p}_{\tau (k-i)})^{\otimes N}, (\nu \bar{p}_{\tau (k-i-1)})^{\otimes N} p_{\tau}^N )+\d ((\nu \bar{p}_{\tau k})^{\otimes N} p_{m}^N,(\nu \bar{p}_{\tau k+m})^{\otimes N})
        \\ & \le  \sum_{i=0}^{k-1} C e^{-\lambda (i \tau+m)} N^{-\alpha} \mathbf{M}(\nu \bar{p}_{k-i-1})+ N^{-\alpha} \mathbf{M}(\nu \bar{p}_k)
        \\&  \le  \left[\sum_{i=0}^{k-1} C e^{-\lambda (i \tau+m)}+1\right] N^{-\alpha} \mathbf{C}(\nu)
        \\ & \le N^{-\alpha} \mathbf{C}(\nu)C\left[1+\frac{e^{-\lambda m}}{1-e^{-\lambda \tau}}\right]\le N^{-\alpha} \mathbf{C}(\nu)C \frac{2}{1-e^{-\lambda \tau}}\, ,
    \end{align*}
    where we used the triangle inequality and applied iteratively \Cref{ass_meanfield}\eqref{ass1_m}, \Cref{ass_meanfield}\eqref{ass2_m} and \Cref{ass_meanfield}\eqref{ass3_m}.
\end{proof}

\begin{remark}
    Similarly to  \cite[Proposition 2.2]{angeli2023uniform} and \Cref{thm:uit_avg_semi}, we can show uniform in time propagation of chaos bounds for the semigroups of the mean-field particle system to the semigroup of the limiting process. Denote by $\cP_t^N$ the semigroup associated with the mean-field particle process $(X_t^{i,N})_{t\ge 0, i=1, \ldots, N}$, and by $\bar{\cP}_t$ the semigroup associated to the limiting process  $(\bar{X}_t)_{t\ge 0}$. 
    We assume strong exponential stability for $\bar{\cP}_t$, i.e., there exists $\lambda>0$ and $C_0>0$ such that for all $f\in \mathcal{C}_b^2(\mathbb{R}^n)$ with $\bar{\mu}(f)=0$, $\| \bar{\cP}_t f\|_{\mathcal{C}_b^2} e ^{-\lambda t}$, 
    and finite time propagation and a uniform control, i.e., there exist positive functions $\phi, \Phi:\mathbb{R}^{Nd}\to [0, \infty)$ and constants $C_1>0$, $\alpha>0$ and $\tau >0$ such that for all $N >0$ and $\mathbf{x}_N=(x_1,\ldots, x_N)\in \mathbb{R}^{Nd}$ it holds $|\mathbb{E}_{\mathbf{x}_N}[f(X_\tau^{1,N})-f(\bar{X}_{\tau})]|\le C_1 \| f\|_{\mathcal{C}_b^2}\phi(\mathbf{x}_N)N^{-\alpha}$
    and $\sup_{t\ge 0} |\mathbb{E}_{\mathbf{x}_N}[\phi((X_t^{i,N})_{i=1}^N]|\le \Phi(\mathbf{x}_N)$.    
    Then it follows analogously to  \Cref{thm:uit_avg_semi} that for all $N\in \mathbb{N}$, $\mathbf{x}_N\in \mathbb{R}^{Nd}$
    \begin{align*}
        \sup_{t\ge 0}|\mathbb{E}_{\mathbf{x}_N}[f(X_t^{N,1})-f(\bar{X}_t)]|\le \frac{C(\|f\|_{\mathcal{C}_b^2}+\bar{\mu}(f))\Phi(\mathbf{x}_N)}{1-e^{-\lambda \tau}}\, .
    \end{align*}
\end{remark}

\subsection{Applications}

Consider the McKean-Vlasov diffusion given by the following system of SDEs: Fix $N\in \mathbb{N}$. 
\begin{align} \label{eq:meanf}
    \rmd X_t^{i,N}=\left[- \nabla U(X_t^{i,N}) +\frac{1}{N}\sum_{j=1}^N \nabla W(X_t^{i,N}-X_t^{j,N})\right] \rmd t + \sqrt{2}\rmd B_t^{i,N}\, ,
\end{align}
where $U,W:\mathbb{R}^d\to \mathbb{R}$ are the confining and the interaction potential, respectively. 
The corresponding nonlinear diffusion is given by 
\begin{align} \label{eq:nonl}
    \rmd \bar{X}_t=\left[- \nabla U(\bar{X}_t) +\int_{\mathbb{R}^d} \nabla W(\bar{X}_t-z)\rmd \bar{\mu}_t(z)\right] \rmd t + \sqrt{2}\rmd B_t
\end{align}
with $\bar{\mu}_t=\Law(\bar{X}_t)$.
Assume $U$ is strongly convex, $W$ is symmetric and convex and $\nabla W$ is locally Lipschitz and has polynomial grow of order $p\ge 1$. Then contractivity for the mean-field particle system holds in $L^2$ Wasserstein distance. If further the initial distribution has finite second moments, then uniform second moment bounds can be proven for the nonlinear diffusion. Further, by \cite{mckean1967} one can show finite in time propagation, which, together with the two previous observations, leads to uniform in time propagation of chaos by \Cref{thm:mf_meas}. In particular for any probability measure $\nu\in\mathcal{P}^2(\mathbb{R}^d)$ the processes $(X_t^{i,N})_{t\ge 0, i=1, \ldots N}$ given by \eqref{eq:meanf} and $(\bar{X}_t^i)_{t\ge 0,i=1, \ldots,N}$ being $N$ i.i.d copies of solutions to \eqref{eq:nonl} with initial conditions $\bar{X}_0^i=X_0^{i,N}\sim \nu$ for all $i=1, \ldots N$ satisfy
\begin{align*}
    \sup_{t\ge 0}\mathbb{E}[N^{-1}\sum_{i=1}^N |\bar{X}_t^i-X_t^{i,N}|^2]\le \tilde{\mathbf{C}}/N\,.
\end{align*}
The constant $\tilde{\mathbf{C}}\in[0,\infty)$ depends only on the second moment of the initial distribution $\nu$ and on properties of $V$ and $W$ and is independent of $N$.
Alternatively, it is possible to show directly uniform in time propagation of chaos bounds, see \cite{malrieu2001}.

\begin{example}[Further examples]
    We note that under relaxed conditions on $U$ and $W$, i.e., $U$ strongly convex at infinity and $W$ Lipschitz continuous, it is possible to verify first Assumption~\ref{ass_meanfield} in $L^1$ Wasserstein distance and conclude that uniform in time propagation of chaos of the form $\sup_{t\ge 0}\mathbb{E}[N^{-1}\sum_{i=1}^N |\bar{X}_t^i-X_t^{i,N}|]\le \tilde{\mathbf{C}}\sqrt{N}$ holds. Alternatively, one can show directly uniform in time propagation of chaos, see \cite{eberle2020}.

    Under similar assumptions uniform in time propagation of chaos can be shown for second order McKean-Vlasov diffusion, see e.g. \cite{schuh2024}.
\end{example}
\appendix

\section{Warning: Time-inhomogeneous setting}\label{sec:appTIH}
The following is an example to show that one must be careful when generalising the results of this paper to other approximations. There is certain structure in the types of approximation we have studied here. In particular, an implicit time homogeneity is assumed. When, instead, there exists an interplay between the approximation index, say $\delta$, and time $t$, even fairly simple and nicely behaved systems may exhibit convergence which is not uniform in time.

\begin{example}
Suppose we have a system
    \begin{equation}\label{eqn:true}
        \rmd X_t = -X_t \rmd t + \rmd W_t
    \end{equation}
    and a corresponding `approximation' given by
    \begin{equation}\label{eqn:approx}
        \rmd X^\delta_t = \left[-X^\delta_t + \mathbb{I}_{t \in \left[\frac{1}{\delta}, \frac{1}{\delta} +1\right]}\right]\rmd t + \rmd W_t \,,
    \end{equation}
    where the same Brownian motion $(W_t)_{t\ge 0}$ is considered in both equations, for simplicity.

    Indeed, we have that
    \begin{equation}
        \rmd (X^\delta_t - X_t) =\left[ -(X^\delta_t - X_t) + \mathbb{I}_{t \in \left[\frac{1}{\delta}, \frac{1}{\delta} +1\right]}\right]\rmd t, 
    \end{equation} which has the following solution
        \begin{equation}
        X^\delta_t - X_t = \begin{cases}
                  0 & t\leq \frac{1}{\delta} \\
      t-\frac{1}{\delta} & \frac{1}{\delta}\leq t\leq \frac{1}{\delta}  + 1\\
      e^{-(t-(\frac{1}{\delta} + 1))} &  x \geq \frac{1}{\delta} + 1 \, . 
        \end{cases}, 
    \end{equation}
    
    Clearly, for every $\delta >0$, there is exponential convergence to equilibrium. We also have finite time convergence of $X_t^\delta$ to $X_t$ (for every given time interval if $\delta$ is chosen sufficiently small, the dynamics in \eqref{eqn:approx} is the same as that of \eqref{eqn:true} during that time interval). Finally, the equilibrium these two processes go to is the same. However, there certainly is not uniform in time convergence. See this by noticing that for all $\delta >0$,
    \begin{equation}
        X^\delta_{\frac{1}{\delta}+1} - X_{\frac{1}{\delta}+1} = 1\, ,
    \end{equation}
    which does not converge to $0$ as $\delta \rightarrow 0$.

    This makes the case that, for the general result, one needs the convergence to equilibrium to be uniform in the approximation parameter, otherwise one can get that the error `runs away' in time.
\end{example}
While the above example is rather particular, it is reasonable to imagine that for an approximation, taking the, say, discretisation parameter small may not reduce the error made once it comes, but simply ensure that the error doesn't come until a later point. In this case one cannot obtain uniform in time convergence.

\subsection*{Acknowledgements}
We would like to thank Michela Ottobre for bringing this topic to KS's attention during the Oberwolfach Seminar on `Constrained Dynamics, Stochastic Numerical Methods and the Modeling of Complex Systems' (May 2024) and for her valuable comments and feedback during the development of the work. 

\bibliography{references}
\bibliographystyle{ieeetr}

\newpage

\end{document}